\theoremstyle{plain}
\newtheorem{Thm}[equation]{Theorem}
\newtheorem{Conj}[equation]{Conjecture}
\newtheorem{Cor}[equation]{Corollary}
\newtheorem{Prop}[equation]{Proposition}
\newtheorem{Lem}[equation]{Lemma}
\numberwithin{equation}{section}
\theoremstyle{remark}
\newtheorem{Rmk}[equation]{Remark}
\newcommand{\Hom}{\operatorname{Hom}}
\newcommand{\Ext}{\operatorname{Ext}}
\newcommand{\Gal}{\operatorname{Gal}}
\newcommand{\GL}{\operatorname{GL}}
\newcommand{\SL}{\operatorname{SL}}
\newcommand{\Sp}{\operatorname{Sp}}
\newcommand{\Mp}{\operatorname{Mp}}
\newcommand{\U}{\operatorname{U}}
\newcommand{\Ind}{\operatorname{Ind}}
\newcommand{\As}{\operatorname{As}}
\newcommand{\Irr}{\operatorname{Irr}}
\newcommand{\Tr}{\operatorname{Tr}}
\newcommand{\Nm}{\operatorname{Nm}}
\newcommand{\Z}{\mathbb{Z}}
\newcommand{\bm}{\begin{multline*}}
\newcommand{\tu}{\end  {multline*}}
\newcommand{\disc}{{\rm disc}}
\def\calH{\mathcal{H}}
\def\calJ{\mathcal{J}}
\def\calR{\mathcal{R}}
\def\calS{\mathcal{S}}
\def\calX{\mathcal{X}}
\def\calY{\mathcal{Y}}
\title{Twisted Gan-Gross-Prasad conjecture for certain tempered L-packets}
\author{Rui Chen and Wee Teck Gan} 
\address{Department of Mathematics, National University of Singapore, Block S17, 10 Lower Kent Ridge Road, Singapore 119076. }
\email{matgwt@nus.edu.sg}
\address{Institute for Advanced Study in Mathmatics, ZheJiang University, East No.7 Building, Zijingang Campus, Hangzhou 310058, China }
\email{rchenmat@zju.edu.cn}
\begin{document}

\begin{abstract}
In this paper, we investigate the twisted GGP conjecture for certain tempered representations using the theta correspondence and establish some special cases, namely when the L-parameter of the unitary group is the sum of conjugate-dual characters of the appropriate sign.
\end{abstract}

\maketitle

\textit{Keywords}: Gan-Gross-Prasad conjecture; Theta correspondence\\

\textit{2020 Mathematics subject classification}: 11F27, 11F70, 20G05

\section{Problem, Conjecture and Results}
 In a recent paper \cite{TGGP}, a twisted version of the Gan-Gross-Prasad conjecture was formulated in the context of skew-Hermitian spaces and their associated unitary groups over local and global fields.  Some evidences were provided in \cite{TGGP} for  the local twisted conjecture, such as in low rank situations and for unitary principal series representations. The purpose of this paper is to provide further affirmative evidences, by establishing the local conjecture for a family of tempered L-packets of unitary groups using the technique of theta correspondence. Let us recall the setup and conjecture of \cite{TGGP} in greater precision and formulate our main result.

\vskip 10pt

\subsection{Biquadratic extension}
Let $F$ be a non-Archimedean local field of characteristic $0$, and $E \neq K$ two distinct quadratic field extensions of $F$. Let $L=E \otimes_F K$, so that $L$ is a biquadratic extension of $F$. We thus have the picture:
\[
\begindc{\commdiag}[80]
\obj(0,4)[a]{$L$}
\obj(-8,0)[b]{$K$}
\obj(8,-0)[c]{$E$}
\obj(0,-4)[d]{$F$}
\mor{b}{a}{$\sigma$}[\atleft,\solidline]
\mor{d}{b}{$\tau$}[\atleft,\solidline]
\mor{a}{c}{$\tau$}[\atleft,\solidline]
\mor{c}{d}{$\sigma$}[\atleft,\solidline]
\enddc
\]
In particular, we have set:
\[
    \Gal(E/F) \simeq \Gal(L/K) \simeq \langle\sigma\rangle, \quad \textit{and} \quad \Gal(K/F) \simeq \Gal(L/E) \simeq \langle\tau\rangle.
\]
We also fix an additive character $\psi_F$ of $F$, and set $\psi_K=\psi_F\circ\Tr_{K/F}$. In this paper, when we talk about Weil representations or theta correspondence, we always use these additive character $\psi_F$ or $\psi_K$ (see Section \ref{WR:Setup}).

\vskip 5pt

\subsection{Skew-Hermitian spaces}
Let $V$ be an $n$-dimensional skew-Hermitian space over $E$. There are exactly two such spaces, which are distinguished  by their sign 
\[
    \epsilon(V)=\omega_{E/F}(\delta^{-n}\cdot\disc V),
\]
where $\disc V=(-1)^{n(n-1)/2}\cdot\det V$, and $\delta$ is a fixed  trace zero element in $E^\times$. As observed in \cite[Lem. 6.1]{TGGP}, the scalar extension $V_K = V \otimes_F K$ is a distinguished  split skew-Hermitian space over $L$ whose isomorphism class is independent of the choice of $V$. In particular, if we continue to use the trace zero element $\delta\in L^\times$ to define the sign of $V_K$, then we always have $\epsilon(V_K)=+1$.

\vskip 5pt

\subsection{Twisted GGP problem}
We come now to the restriction problem to be studied. For the skew-Hermitian space $V$ over $E$, we have the Weil representation $\omega_{V,\mu}$, where $\mu$ is a conjugate-symplectic character of $E^\times$. Then we are interested in determining
\[
    m_{V}(\pi,\mu) = \dim \Hom_{\U(V)}(\pi, \omega_{V,\mu}) \quad \textit{for } \pi\in \Irr\left(\U(V_K)\right).
\]
Here is the main local conjecture for the twisted GGP problem:

\vskip 5pt

\begin{Conj}\label{C-TGGP}
~
\begin{enumerate}
    \item For each $\pi\in \Irr\left(\U(V_K)\right)$, $m_{V}(\pi,\mu) \leq 1$. \\

    \item Let $M$ be a generic L-parameter of $\U(V_K)$ with associated L-packet $\Pi_M$. Then
        \[
            \sum_V \sum_{\pi\in\Pi_M} m_{V}(\pi,\mu) =1,
        \]
        where the first sum runs over the two skew-Hermitian spaces over $E$ of dimension $n$, and the second runs over the L-packet $\Pi_M$. \\

    \item The unique $V_0$ which has non-zero contribution to the sum in $(2)$ is characterized by 
        \[
            \epsilon(V_0)=\epsilon\left(\frac{1}{2}, \As^+_{L/E}(M)\otimes\mu^{-1}, \psi_{E,\delta}\right)\cdot\omega_{K/F}\left(\delta^2\right)^{n(n-1)/2},
        \]
        where $\delta$ is the fixed trace zero element in $E^\times$ (used in the definition of $\epsilon(V_0)$), and $\psi_{E,\delta}=\psi_F(\Tr_{E/F}(\delta\cdot~))$. \\

    \item The unique $\pi\in \Pi_M$ which has non-zero contribution to the sum in $(2)$ corresponds via the LLC (with respect to the Whittaker datum of $\U(V_K)$ associated to $\psi_K$) to the character of local component group $A_M=\prod_{i\in I}\Z/2\Z\cdot a_i$ given by:
        \begin{align*}
            \eta(a_i) &= \epsilon \left(\frac{1}{2}, \Ind_L^E\left(^{\tau}M_i \otimes (M/M_i)\right)\cdot\mu^{-1}, \psi_{E,\delta}\right)\\
            &= \epsilon\left(\frac{1}{2}, [\As(M_i)+\As(M)+\As(M/M_i)]\cdot\mu^{-1}, \psi_{E,\delta}\right),
        \end{align*}
        where $M_i$ is the irreducible constituent of $M$ corresponding to $a_i\in A_M$.
\end{enumerate}
\end{Conj}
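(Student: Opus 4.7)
The plan is to prove the conjecture in the special case advertised in the abstract, namely when the L-parameter $M = \bigoplus_{i\in I}\chi_i$ is a multiplicity-free direct sum of conjugate-dual characters of $L^\times$ of the appropriate sign, by induction on $|I|$ using the see-saw technique for skew-Hermitian unitary dual pairs. The base case $|I|=1$ reduces to the assertion that the theta lift of a single conjugate-dual character from a rank-one unitary group lands on the predicted space $V_0$, which is a consequence of the local theta dichotomy combined with an explicit epsilon-factor identity.

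First I would realize each $\pi\in\Pi_M$ as an iterated theta lift: after fixing an indexing of $I$, one constructs $\pi$ by climbing a tower of skew-Hermitian unitary dual pairs over $L$, at each stage theta-lifting the partial lift and twisting by the next $\chi_i$. The Gan--Ichino dichotomy, together with the local Langlands correspondence for unitary groups of Mok and Kaletha--Minguez--Shin--White, identifies both the sign of the target space (which is forced to be $+1$ since we are building representations of $\U(V_K)$) and the character of the component group $A_M$ produced by the construction, matching the prescription in part (4). The inductive step is carried out through a see-saw of the shape
\[
\U(V_K) \;\supset\; \U(V) \qquad \text{vs.} \qquad \U(Y) \;\supset\; \U(Y')\times\U(1),
\]
arranged so that pairing $\pi$ with the restriction of an outer Weil representation to $\U(V)$ re-expresses $m_V(\pi,\mu)$ as a twisted GGP multiplicity $m_{V'_0}(\pi',\mu')$ for the smaller L-packet with parameter $M' = \bigoplus_{i\neq i_0}\chi_i$, multiplied by a local root number arising from the Rallis inner product formula / doubling integral applied to $\chi_{i_0}$. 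By the inductive hypothesis the smaller multiplicity is computed by the recipes (2)--(4) for $M'$, and one checks that the see-saw factor is precisely the ratio required to pass from the $M'$-prediction to the $M$-prediction.

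The main obstacle is the epsilon-factor bookkeeping: the see-saw/doubling input produces local root numbers of Rankin--Selberg and Asai shape, which must be reorganized into the forms
\[
\epsilon\bigl(\tfrac12, \As^+_{L/E}(M)\otimes\mu^{-1},\psi_{E,\delta}\bigr) \quad \text{and} \quad \epsilon\bigl(\tfrac12,\Ind_L^E({}^{\tau} M_i\otimes M/M_i)\cdot\mu^{-1},\psi_{E,\delta}\bigr)
\]
appearing in parts (3) and (4), all while keeping careful track of the biquadratic correction $\omega_{K/F}(\delta^2)^{n(n-1)/2}$ which enters because $\delta\in E^\times$ plays a dual role: defining $\epsilon(V_0)$ and twisting $\psi_F\circ\Tr_{E/F}$ into $\psi_{E,\delta}$. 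The multiplicity-one statement (1) will then follow from the inductive construction, bounded by the smaller-rank multiplicity which is $\leq 1$ by induction, with the base case supplied by the untwisted unitary GGP multiplicity-one theorem of Sun and Beuzart-Plessis.
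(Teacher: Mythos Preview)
Your overall strategy---induction on the number of characters via theta correspondence and a see-saw---matches the paper's, but the see-saw you wrote down is not the one that works, and the gap is not just notational. The containment $\U(V_K)\supset\U(V)$ arises from base change $E\hookrightarrow L$, not from dropping a line, so its see-saw partner cannot be of the shape $\U(Y)\supset\U(Y')\times\U(1)$. What the paper does instead is introduce an $n$-dimensional Hermitian space $W$ over $L$ and its \emph{restriction of scalars} $\calR W$ to $E$; the correct see-saw partner of $\U(V_K)\supset\U(V)$ is $\U(\calR W)\supset\U(W)$. Moreover, a single see-saw does not suffice: the paper uses two. The first converts $m_V(\Pi,\mu)$ into $\dim\Hom_{\U(W)}(\Lambda,\Sigma)$ where $\Sigma$ is the theta lift of $\Pi$ down to $\U(W)$ and $\Lambda=\Theta(\omega_{V,\mu}[\chi])$ is the theta lift of the relevant Weil-representation eigenspace up to $\U(\calR W)$. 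The second see-saw then recognizes $\Lambda$ as \emph{also} being $\Theta(\omega_{V^\flat,\mu}[\chi^\flat])$ for a smaller skew-Hermitian space $V^\flat$ over $E$, and thereby converts the middle Hom-space into $m_{V^\flat}(\Pi^\flat,\mu)$ with the same $\mu$. The heart of the argument is this double identification of $\Lambda$, which requires Adams'-conjecture-type control of A-packets under theta (not Rallis inner product or doubling) and a careful labelling computation; the epsilon-factors enter only through the sign of $V^\flat$ and the Prasad--Gan--Ichino dichotomy determining $W$, not as a multiplicative factor on the multiplicity.

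There is also a technical point your proposal omits: for the see-saw identities to give exact equalities of dimensions (rather than inequalities or maps), one needs the \emph{big} theta lifts $\Theta(\omega_{V,\mu}[\chi])$ and $\Theta(\omega_{V^\flat,\mu}[\chi^\flat])$ to be irreducible. The paper devotes a full section to this, proving an $\Ext$-vanishing result via an inductive ping-pong argument using Kudla's filtration and the length-2 structure of certain parabolically induced representations. Finally, your base case is misidentified: the untwisted unitary GGP multiplicity-one theorem of Sun and Beuzart--Plessis is not what is needed; the induction bottoms out at the twisted conjecture for $\dim V\le2$, which was established directly in the predecessor paper.
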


\vskip 5pt
We remark that \cite{TGGP} also formulated a conjecture in the case $E = K$ and showed that, in this case, the conjecture can be reduced to the case of discrete series representations of $\U(V_K) \simeq \GL(V)$. However, we do not deal with the case $E = K$ in this paper. 

\vskip 5pt
In \cite[Sect. 7]{TGGP}, the three authors have proved that:
\begin{Thm}  \label{T-TGGP}
~
\begin{enumerate}
\item Conjecture \ref{C-TGGP} holds if $n\leq 2$.

\vskip 5pt

\item Conjecture \ref{C-TGGP}(1)-(3) hold for unitary principal series representations (induced from the Borel subgroup), and (4) holds as well if the unitary principal series is irreducible. 
\end{enumerate}
\end{Thm}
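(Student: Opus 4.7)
The plan is to treat the two assertions separately, using low-rank accidents for (1) and a Mackey-type orbit analysis of the principal series for (2).

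For part~(1) with $n=1$, both $\U(V)$ and $\U(V_K)$ are norm-one tori, concretely $E^1 \subset L^1$, and every irreducible representation of $\U(V_K)$ is a character $\chi$ of $L^1$. The Weil representation $\omega_{V,\mu}$ restricts to $E^1$ as a character determined by $\mu$, so multiplicity one is immediate, and assertions (2)--(4) reduce to four explicit character identities that one verifies by local class field theory together with Hilbert~90 applied to the exact sequence relating $E^1$, $L^1$, and $K^1$. For $n=2$, my plan would be to exploit a see-saw dual pair and the local theta dichotomy for $(\U(1),\U(V))$ to reduce the twisted restriction problem on $\U(V_K)$ to a rank-one situation; alternatively, an accidental isomorphism identifying the rank-two unitary groups with $\SL_2$-type groups over $K$ or $L$ transplants the problem into a setting where the multiplicity can be computed by hand.

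For part~(2), fix a Borel $B \subset \U(V_K)$ stabilizing a complete isotropic flag in $V_K$. The $\U(V)$-orbits on the flag variety $\U(V_K)/B$ are finite by a Bruhat-type argument, and the Mackey filtration writes $\Ind_B^{\U(V_K)} \chi$ as an iterated extension of $\U(V)$-subquotients of the form $\operatorname{ind}_{\U(V) \cap wBw^{-1}}^{\U(V)}(\chi^w)$, twisted by modular characters. Applying $\Hom_{\U(V)}(-,\omega_{V,\mu})$ and Frobenius reciprocity reduces each contribution to a space of intertwining functionals on a stabilizer, computable from the Schr\"odinger model of $\omega_{V,\mu}$. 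I expect only the open cell to contribute, and to do so in dimension exactly one for a single space $V$, yielding (1)--(2); the sign distinguishing that $V$ should emerge from a Fourier-analytic evaluation of the open-cell integral, matching the prediction of (3). For irreducible principal series, (4) then follows by decomposing the same open-cell integral into local Tate gamma factors and recognizing the resulting product as the predicted epsilon factor on each generator $a_i \in A_M$; irreducibility ensures that $M$ is regular and the generators are unambiguously indexed.

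The main obstacle I anticipate is the epsilon-factor matching in (3)--(4): translating the open-cell intertwining integral into the precise Asai and induced epsilon factors $\epsilon(1/2, \As^+_{L/E}(M) \otimes \mu^{-1}, \psi_{E,\delta})$ and $\epsilon(1/2, \Ind_L^E({}^\tau M_i \otimes (M/M_i)) \cdot \mu^{-1}, \psi_{E,\delta})$. The bookkeeping is delicate because three additive characters $\psi_F$, $\psi_K$, $\psi_{E,\delta}$ and the trace-zero element $\delta$ all intervene, and the correction factor $\omega_{K/F}(\delta^2)^{n(n-1)/2}$ in (3) must arise naturally from rescaling in the analytic computation. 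The restriction to irreducible principal series in (4) signals that identifying the component-group character on nongeneric constituents of a reducible L-packet is genuinely more subtle, and I would not expect to resolve that case by the Mackey method alone.
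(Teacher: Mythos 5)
This theorem is not proved in the present paper; it is quoted from \cite[Sect. 7]{TGGP}, so there is no internal argument here to compare against. With that caveat, your plan --- torus character identities for $n=1$, low-rank reduction for $n=2$, and a Mackey/orbit analysis of $\U(V)$-orbits on $\U(V_K)/B$ for the principal series case --- is a plausible reconstruction of the strategy of \cite{TGGP}, and the concerns you flag about epsilon-factor bookkeeping are indeed the genuine pressure points.

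Three things in your sketch would still need to be supplied before it constitutes a proof. First, the assertion that only the open $\U(V)$-orbit contributes to $\Hom_{\U(V)}(\Ind_B\chi,\omega_{V,\mu})$ is not formal: the non-open orbits carry modular character twists, and eliminating their contributions requires matching those twists against the central character and equivariance structure of $\omega_{V,\mu}$ in its mixed (Schr\"odinger) model; this is a real computation, not a consequence of the existence of the filtration. Second, for $n=2$ you propose two unrelated routes (a see-saw reduction to rank one versus an exceptional isomorphism with $\SL_2$-type groups) without carrying either through; the content of the low-rank verification lies in actually executing one of them and confronting the sign of $V$. Third, your explanation of why part (4) is restricted to irreducible principal series --- that regularity of $M$ makes the generators $a_i$ ``unambiguously indexed'' --- misidentifies the actual obstruction. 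When $\Ind_B\chi$ is reducible, the L-packet $\Pi_M$ consists of several constituents, and the open-cell functional you construct lives a priori on the full induced module; isolating which constituent actually supports the $\U(V)$-homomorphism, and hence verifying the predicted $\eta\in\Irr A_M$, requires strictly more than the Mackey filtration alone. It is precisely this residual case that Theorem~\ref{T:Main} of the present paper, via its first Corollary, is designed to close.
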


\vskip 5pt
\subsection{\bf Main result}
Our main result is  the following theorem.
\vskip 5pt

\begin{Thm}\label{T:Main}
Let $M$ be a tempered L-parameter for $\U(V_K)$ of the form
\[
    M = M_1+\cdots +M_n 
\]
with each $M_i$ $1$-dimensional and conjugate self-dual of parity $(-1)^{n-1}$.
Then Conjecture \ref{C-TGGP} holds for $M$.
\end{Thm}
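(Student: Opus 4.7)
The plan is to proceed by induction on $n$, using theta correspondence to peel off one character $M_i$ at each step. The base case $n = 1$ is immediate from Theorem \ref{T-TGGP}(1). For the inductive step, I would pick a $1$-dimensional Hermitian space $W$ over $E$ and a conjugate self-dual character $\chi$ of $\U(W) \simeq E^1$ chosen so that the L-parameter of its base change $\chi_K$ to $\U(W_K)$ matches the character $M_n$ after an appropriate twist by the splitting character of $\mu$. Using the established Prasad conjecture for the theta correspondence of unitary groups (Gan--Ichino), each $\pi \in \Pi_M$ can then be realized as the theta lift of some $\pi' \in \Pi_{M'}$ for the parameter $M' = M_1 + \cdots + M_{n-1}$ on an $(n-1)$-dimensional group $\U(V'_K)$; the parity assumption $(-1)^{n-1}$ on the $M_i$ is exactly what is needed for this step, as it ensures compatibility with the type of Weil representation being used.

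The reduction to a smaller multiplicity problem will go through a see-saw identity associated to the dual pairs $(\U(V_K), \U(W_K))$ over $L/K$ and $(\U(V), \U(W))$ over $E/F$, where the two vertical inclusions are the natural base-change embeddings. Applying this see-saw, I would rewrite $\Hom_{\U(V)}(\pi, \omega_{V,\mu})$ as a $\Hom$-space on the dual side involving the theta lift of $\pi$, whose restriction to $\U(W)$ is pitted against a modified Weil representation. After absorbing $\chi$ into the twist by $\mu$, I arrive at a multiplicity of the same shape $m_{V'}(\pi', \mu')$ for the twisted GGP problem of rank $n - 1$, to which the inductive hypothesis applies and yields the multiplicity-one statement together with a candidate for the unique $(V_0, \pi)$ contributing.

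The main obstacle will be to verify that the sign $\epsilon(V_0)$ predicted in Conjecture \ref{C-TGGP}(3) and the component-group character $\eta$ predicted in (4) agree with the quantities produced by this theta-correspondence reduction. This will require careful tracking of local root numbers under theta lifts, using the $\epsilon$-factor shift formulas of Gan--Ichino--Prasad, the multiplicativity of Asai $\epsilon$-factors, and the base-change relation between Asai $\epsilon$-factors for $L/E$ and standard $\epsilon$-factors for $W_L$. Concretely, one must show that $\eta(a_n)$ on the peeled-off factor equals the predicted $\epsilon(\tfrac{1}{2}, \Ind_L^E({}^{\tau}M_n \otimes (M/M_n)) \cdot \mu^{-1}, \psi_{E,\delta})$; this identification is the crux of the calculation and becomes tractable thanks to the explicit form of the auxiliary character $\chi$ and its prescribed relation to $M_n$. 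The sign dichotomy for $\epsilon(V_0)$ then follows from the conservation relation, which determines on which $V$ the theta lift is nonzero.
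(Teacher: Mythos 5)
Your high-level plan -- induction on $n$, peeling off one character $M_i$ at a time via theta correspondence, and a see-saw identity to relate the multiplicity on $\U(V_K)$ to a smaller multiplicity -- is indeed the strategy of the paper. But your description of the reduction skips the central technical content, and the see-saw as you write it is not one. The pairs $(\U(V_K), \U(W_K))$ over $L/K$ and $(\U(V), \U(W))$ over $E/F$ do not ``cross'' inside a common symplectic group; the paper's diagrams necessarily involve a restriction of scalars, pairing $\U(V_K) \times \U(W)$ over $L/K$ with $\U(V) \times \U(\calR W)$ over $E/F$, where $W$ is an $n$-dimensional Hermitian space over $L$ (not a $1$-dimensional space over $E$) and $\calR W$ is its restriction to $E$. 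More importantly, a single see-saw does not deliver a smaller twisted GGP multiplicity. The first see-saw converts $m_V(\Pi,\mu)$ into $\dim\Hom_{\U(W)}(\Lambda,\Sigma)$, where $\Lambda = \Theta(\omega_{V,\mu}[\chi])$ lives on the $2n$-dimensional group $\U(\calR W)$; this Hom-space is not of twisted GGP shape. One then needs a \emph{second} see-saw, applied after proving (Proposition \ref{P:AdamsWR}) that the very same $\Lambda$ is also the big theta lift of a Weil representation piece $\omega_{V^\flat,\mu}[\chi^\flat]$ from a carefully chosen $n$-dimensional space $V^\flat$ -- with a prescribed sign $\epsilon(V^\flat)$ and with carefully matched splitting characters involving an auxiliary conjugate orthogonal character $\Upsilon$. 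This identification, proved via Adams' conjecture and explicit component-group bookkeeping, is the crux of the whole argument, and your proposal does not contain it or a substitute for it.

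Two further omissions matter. First, for the see-saw to yield equalities of multiplicities (rather than inequalities or comparisons of socles), one needs the big theta lifts appearing in the argument to be irreducible, and more: the paper establishes vanishing of all higher $\Ext_{\U(V)}^i(\Omega,\pi)_{sm}$ in Theorem \ref{ThmBigThetaEnhanced}, which occupies the whole of \S\ref{S:WR} (via Kudla's filtration, Atobe's length-two lemma, and an Ext-periodicity ``ping-pong'' argument). Your proposal has no analogue of this. Second, the L-parameter obtained after the reduction is not $M_1 + \cdots + M_{n-1}$ on the nose but a twist $M^\flat = M'_0 \cdot \calY$ by a conjugate orthogonal character of $L^\times$; the paper needs Lemma \ref{L:CT} to see that the conjecture is invariant under such twists, and your proposal does not account for this shift. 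The ``absorbing $\chi$ into the twist by $\mu$'' phrase gestures at this but would not, as stated, produce a problem of the form $m_{V'}(\pi',\mu')$ directly from a single see-saw.
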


Note that though these tempered L-parameters $M$ are maximally reducible and hence not the most general in the $p$-adic case, they are the ones whose L-packets are of maximal size. Hence, in some sense, they provide the most stringent test for Conjecture \ref{C-TGGP}.   
An immediate corollary of our result is that we may complete Theorem \ref{T-TGGP}(2)  above: 
\vskip 5pt

\begin{Cor}
Conjecture \ref{C-TGGP} holds for the tempered L-packets associated with unitary principal series representations.
\end{Cor}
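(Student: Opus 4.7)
The natural approach is to realize each representation in $\Pi_M$ as a theta lift, apply a seesaw identity to reduce the twisted GGP multiplicity to a lower-rank instance of Conjecture \ref{C-TGGP}, and then induct on $n$. The Corollary then follows immediately from Theorem \ref{T:Main}, since the tempered L-parameter of a unitary principal series representation of $\U(V_K)$ is a direct sum of conjugate self-dual characters of $L^\times$ of the required parity, so the only case missed by Theorem \ref{T-TGGP}(2) is covered.

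The first step is a theta-lift realization of $\Pi_M$. Since each $M_i$ is a $1$-dimensional conjugate self-dual character of parity $(-1)^{n-1}$, the Gan--Ichino description of the local Langlands correspondence for the unitary theta correspondence should allow one to peel off a single summand $M_i$: each $\pi_\eta \in \Pi_M$ is obtained as a theta lift $\theta(\pi_{\eta'} \boxtimes \chi_\eta)$, where $\pi_{\eta'}$ lies in the L-packet of $M' = M - M_i$ on $\U(V_K')$ for a skew-Hermitian space $V'$ over $E$ of dimension $n-1$, and $\chi_\eta$ is a character of a $1$-dimensional unitary group, with an explicit dictionary $\eta \leftrightarrow (\eta',\chi_\eta)$ at the level of component groups.

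Next, I would apply a seesaw identity exploiting the $\tau$-fixed inclusions $\U(V) \subset \U(V_K)$ and $\U(W) \subset \U(W_K)$ (for an auxiliary Hermitian space $W$ over $E$). Combined with the theta-lift realization above, the seesaw converts
\[
    \Hom_{\U(V)}\bigl(\theta(\pi_{\eta'} \boxtimes \chi_\eta),\, \omega_{V,\mu}\bigr)
\]
into a Hom space of the form $\Hom_{\U(V')}\bigl(\pi_{\eta'},\, \omega_{V',\mu'}\bigr)$ for modified data $(V',\mu')$, yielding a recursion $m_V(\pi_\eta,\mu) = m_{V'}(\pi_{\eta'},\mu')$ up to a local constant. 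The base case $n \le 2$ is Theorem \ref{T-TGGP}(1), and this settles Conjecture \ref{C-TGGP}(1)--(2) by induction. For parts (3)--(4), the Gan--Ichino formula for the component group character of the theta lift expresses $\eta$ in terms of $\eta'$, $\chi_\eta$, and a local root number; iterating and simplifying via multiplicativity of Asai factors and the inductivity of local root numbers for $\Ind_L^E$ should then collapse to the closed-form expressions in Conjecture \ref{C-TGGP}(3)--(4).

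The main obstacle will be this last step: carefully tracking all local epsilon factors and signs arising at each peel-off stage so that, after $n-1$ steps, the accumulated product precisely matches the predicted closed form, which involves $\As(M_i)$, $\As(M/M_i)$, $\As(M)$ together with the discriminant correction $\omega_{K/F}(\delta^2)^{n(n-1)/2}$ identifying the distinguished space $V_0$. This is a delicate but ultimately combinatorial bookkeeping over the biquadratic tower $L/E/K/F$, relying essentially on the known compatibility of the local Langlands correspondence with the unitary theta correspondence, and will need to be organized so that the $2^{n-1}$ possible values of $\eta$ are matched correctly with the $2^{n-1}$ constituents of $\Pi_M$.
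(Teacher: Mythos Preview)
Your observation in the first paragraph that the Corollary follows immediately from Theorem~\ref{T:Main} is correct and is precisely the paper's argument; the paper offers no further proof beyond stating it as an ``immediate corollary'' of Theorem~\ref{T:Main} together with Theorem~\ref{T-TGGP}(2). The remainder of your proposal is not a proof of the Corollary but a sketch of the proof of Theorem~\ref{T:Main} itself (theta-lift realization of $\Pi_M$, seesaw identities, induction on $n$, and the bookkeeping of epsilon factors), which the paper carries out separately in \S\ref{S:Proof}; none of that machinery is needed here once Theorem~\ref{T:Main} is in hand.
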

\vskip 5pt

\subsection{\bf Idea of proof}
The main tool for the proof of Theorem \ref{T:Main} is the theta correspondence. Using theta correspondence, we shall effectively show that Theorem \ref{T:Main} for the case $\dim V = n+1$ can be reduced to the case for $\dim V = n$. In this way, for the type of tempered L-parameters $M$ considered in Theorem \ref{T:Main}, we may use theta correspondence to successively strip off the irreducible summands $M_i$ one at a time and reduce the conjecture for such $M$'s to the case when $\dim V = 1$. In fact, since the conjecture has been shown for $\dim V \leq 2$, we could have formulated a slightly more general main result. We content ourselves with just the following corollary:
\vskip 5pt

\begin{Cor}
Conjecture \ref{C-TGGP} holds for all endsocopic tempered L-packets of $\U(V_K)$ when $\dim V = 3$. 
\end{Cor}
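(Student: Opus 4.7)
The plan is to follow the theta-correspondence reduction strategy described in the idea of proof of Theorem \ref{T:Main}. For $\dim V = 3$, a tempered L-parameter $M$ of $\U(V_K)$ is endoscopic precisely when it is reducible, and up to isomorphism the only possibilities are
\[
    M = M_1 + M_2 + M_3 \qquad \text{or} \qquad M = \chi + N,
\]
where each $M_i$ and $\chi$ is a conjugate self-dual character, and $N$ is an irreducible conjugate self-dual $2$-dimensional representation of the Weil-Deligne group of $L$ (all of parity $(-1)^{3-1}=+1$).

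The first decomposition is precisely the case already covered by Theorem \ref{T:Main}, so nothing new is needed there.

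For the second case $M = \chi + N$, the strategy is to invoke the theta-correspondence reduction underlying Theorem \ref{T:Main}. That reduction does not need \emph{every} summand of $M$ to be $1$-dimensional; it only needs \emph{one} $1$-dimensional conjugate self-dual summand to seed a theta lift, and then it descends Conjecture \ref{C-TGGP} from $\U(V_K)$ with $\dim V = n+1$ to the parameter obtained by deleting that character on $\U(V'_K)$ with $\dim V' = n$. Applying this with $\chi$ peeled off sends $M = \chi + N$ with $\dim V = 3$ to the irreducible parameter $N$ with $\dim V' = 2$, which is already settled by Theorem \ref{T-TGGP}(1). The distinguished space $V_0$ and the component-group character $\eta$ in parts (3)-(4) of Conjecture \ref{C-TGGP} are tracked through the seesaw diagrams and the Prasad-type $\epsilon$-factor identities that drive the reduction, exactly as in the proof of Theorem \ref{T:Main}.

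The main thing to verify is therefore that the machinery assembled for Theorem \ref{T:Main} genuinely applies whenever the parameter left behind after stripping a character is a general sum of conjugate self-dual pieces, not necessarily all of dimension $1$. This reduces to checking that the local theta-lift identities and the $\epsilon$-factor bookkeeping used in the descent do not exploit any special feature of the surviving summands beyond their being conjugate self-dual of the prescribed parity, which should be straightforward given how those identities are formulated. Granting that, the corollary follows at once.
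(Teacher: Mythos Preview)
Your approach is essentially the paper's: an endoscopic tempered parameter of $\U_3$ contains a one-dimensional summand, so one strips it via the theta-correspondence reduction and lands in the $\dim V = 2$ case covered by Theorem~\ref{T-TGGP}(1). The hedge you flag---whether the reduction machinery of \S\ref{S:Proof} really applies when the residual $M_0$ is an arbitrary tempered parameter rather than a sum of characters---is not a gap: the paper's seesaw argument in \S\ref{S:Proof} is carried out for any decomposition $M = M_0 + M_1$ with $M_1$ a character, and the resulting Proposition at the end of \S\ref{S:Proof} states this in full generality, so your ``granting that'' is already established.
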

This is because all endoscopic tempered L-packets of $\U_3$ can be constructed by theta lifting from tempered L-packets of $\U_2$. 
\vskip 10pt

The rest of the paper is devoted to the proof of Theorem \ref{T:Main}.
In \S \ref{S:WR}, we study a local theta lift of a Weil representation of a unitary group to the edge of the stable range. The main point here is to show that the resulting big theta lift is irreducible. Then in \S \ref{S:Proof}, we show how the conjecture in dimension $n+1$ can be reduced to that in dimension $n$ by invoking two seesaw arguments.

\vskip 10pt

\section{Weil representations}\label{S:WR}
In this section, we examine the Weil representation $\omega_{V,\mu}$, and investigate its behaviour under the theta correspondence.

\vskip 5pt

\subsection{Local theta correspondence}\label{WR:Setup}
We first recall the basic setup of the local theta correspondence. Let $F\subset E$ be a quadratic extension of non-Archimedean local fields, $V$ an skew-Hermitian space of dimension $n$ and $W$ an Hermitian space of dimension $m$. We shall use the symbol $\calH$ (resp. $\calH'$) to denote the skew-Hermitian (resp. Hermitian) hyperbolic plane. \\

To consider the theta correspondence for the reductive dual pair $\U(V)\times \U(W)$, one requires some additional data:
\begin{itemize}
\item a non-trivial additive character $\psi_F$ of $F$;
\vskip 5pt

\item a pair of characters $\chi_V$ and $\chi_W$ of $E^\times$ such that 
\begin{equation*}
\chi_V\big|_{F^\times}=\omega_{E/F}^{\dim V}\quad\textit{and}\quad\chi_W\big|_{F^\times}=\omega_{E/F}^{\dim W}.
\end{equation*}
\vskip 5pt
%\item a trace zero element $\delta\in E^\times$.
\end{itemize}
To elaborate, the tensor product $V\otimes W$ has a natural symplectic form, which induces a natural map
\begin{equation*}
\U(V)\times \U(W)\longrightarrow \Sp(V\otimes W).
\end{equation*}
One has the metaplectic $S^1$-cover $\Mp(V\otimes W)$ of $\Sp(V\otimes W)$, and the character $\psi_F$ determines a Weil representation $\omega_{\psi_F}$ of $\Mp(V\otimes W)$. The datum $(\psi_F,\chi_V,\chi_W)$ then allows one to specify a splitting of the metaplectic cover over $\U(V)\times \U(W)$ following \cite{MR1286835}. Hence, we have a Weil representation $\omega=\omega_{V,W}$ of $\U(V)\times \U(W)$. \\

As explicated in \cite{MR1286835} and \cite{MR1327161}, the splitting over $\U(V)$ is determined by $(\psi_F,\chi_W)$, whereas that of $\U(W)$ by $(\psi_F,\chi_V)$. In particular, taking $W$ such that $\dim W=1$ and $\chi_W=\mu$ a conjugate symplectic character of $E^\times$, one gets a splitting over $\U(V)$ associated to $(\psi_F,\mu)$, and thus a Weil representation $\omega_{V,\mu}$ of $\U(V)$, which is the one appearing in the main conjecture.\\

Given an irreducible representation $\pi$ of $\U(V)$, the maximal $\pi$-isotypic quotient of $\omega$ is of the form 
\begin{equation*}
\Theta(\pi)\boxtimes\pi
\end{equation*}
for some smooth representation $\Theta(\pi)$ of $\U(W)$ of finite length. By the Howe duality \cite{MR1159105} \cite{MR3502978} \cite{MR3454380}, we have:
\begin{itemize}
\item The maximal semi-simple quotient $\theta(\pi)$ of $\Theta(\pi)$ is irreducible if $\Theta(\pi)$ is non-zero;
\item If $\pi_1\not\simeq\pi_2$ are two non-isomorphic irreducible smooth representations of $\U(V)$ such that both $\theta(\pi_1)$ and $\theta(\pi_2)$ are non-zero, then $\theta(\pi_1)\not\simeq\theta(\pi_2)$.
\end{itemize}

\vskip 5pt

\subsection{A refinement of Adams' conjecture} \label{SS:AC}
Next we give a description of the theta correspondence in terms of A-parameters. We fix a non-trivial additive character $\psi_F$ once and for all. Assume that
\[
    m=\dim W \geq n=\dim V \geq 1.
\]
Fix a pair of splitting characters $(\chi_V,\chi_W)$ and consider the theta correspondence between $\U(V) \times \U(W)$ with respect to it.

\vskip 5pt

Let $\psi$ be a local A-parameter of $\U(V)$. If we write it as a sum of irreducible subrepresentations
\[
    \psi=\sum_i \rho_i S_{a_i} \boxtimes S_{b_i},
\] 
we say that $\psi$ is of good parity if $\rho_i S_{a_i} \boxtimes S_{b_i}$ is conjugate self-dual of parity $(-1)^{n-1}$ for all $i$. Here we are following Atobe-Gan's notation \cite{MR3714507} on irreducible representations of the Weil-Deligne group $WD_E=W_E\times \SL_2$; we omit the tensor symbol between $\rho_i$ and $S_{a_i}$ to distinguish finite dimensional representations of the Weil-Deligne $\SL_2$ and the Arthur $\SL_2$.

\begin{Thm} \label{T:AC-P}
~
\begin{enumerate}
\item Assume that $\psi$ is of good parity and
\[
    m-n \geq \max_i \left\{b_i-a_i+1~\big|~\rho_i\simeq\chi_W\right\}.
\]
Let $\pi$ be an irreducible unitary representation in the local A-packet $\Pi_\psi(\U(V))$. Then the theta lift $\theta(\pi)$ of $\pi$ to $\U(W)$ lies in the local A-packet $\Pi_{\theta(\psi)}(\U(W))$ if it is non-zero, where 
\[
    \theta(\psi)=\psi\chi_W^{-1}\chi_V+\chi_V\boxtimes S_{m-n}.
\]~

\item
Moreover, if we further assume that
\[
    m-n > \max_i \left\{b_i+a_i-1~\big|~\rho_i\simeq\chi_W\right\},
\]
then $\theta(\pi)$ must be non-zero for any $\pi\in\Pi_\psi(\U(V))$.
\end{enumerate}
\end{Thm}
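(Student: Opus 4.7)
The strategy I would pursue is to reduce Theorem \ref{T:AC-P} to the case of tempered L-parameters of good parity, and handle that base case by a seesaw/Jacquet-module argument in the style of M\oe glin. The reduction exploits M\oe glin's realization of good-parity A-packets as subquotients of parabolic inductions of Speh representations over tempered L-packets, together with the compatibility of theta lifting with parabolic induction provided by Kudla's filtration.

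For part (1), the tempered sub-case $\psi = \phi$ (all $b_i = 1$) says $\theta(\pi) \in \Pi_{\theta(\phi)}$ for $\pi$ in the L-packet $\Pi_\phi$; this is a refinement of Adams' conjecture in the tempered setting. I would establish it by induction on $\dim V$, using a seesaw with $\U(V')\times \U(W)$ (for $V'$ of dimension $n-1$) to identify a maximal parabolic Jacquet module of $\theta(\pi)$. For general good-parity $\psi$, I would then write $\pi \in \Pi_\psi$ as a subquotient of $\Ind_P^{\U(V)}(\sigma \otimes \pi_0)$, where $\pi_0$ lies in a smaller good-parity A-packet $\Pi_{\psi_0}$ and $\sigma$ is a product of Speh representations on the $\GL$ factor of a Levi. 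Applying Kudla's filtration to the theta lift of this induced representation yields a filtration whose successive quotients involve theta lifts of Jacquet modules of $\sigma$ and $\pi_0$; the hypothesis
\[
    m - n \ \geq \ \max_i \bigl\{\, b_i - a_i + 1 \,:\, \rho_i \simeq \chi_W \,\bigr\}
\]
is tailored precisely so that all but the top term of this filtration vanish. The top term is $\Ind\bigl(\sigma\chi_W^{-1}\chi_V \otimes \theta(\pi_0)\bigr)$, which by the inductive hypothesis for $\psi_0$ belongs to $\Pi_{\theta(\psi)}$; passing to an irreducible subquotient and invoking Howe duality concludes part (1).

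For part (2), the stronger hypothesis
\[
    m - n \ > \ \max_i \bigl\{\, b_i + a_i - 1 \,:\, \rho_i \simeq \chi_W \,\bigr\}
\]
places $W$ strictly beyond the first occurrence of every $\pi \in \Pi_\psi$ in the Witt tower containing it. Combined with the Kudla-Rallis tower property (every $\pi$ first occurs somewhere in the tower) and the Sun-Zhu conservation relation (which pins down that first occurrence in terms of the $\chi_W$-components of $\psi$), this forces $\Theta(\pi) \neq 0$ on $\U(W)$, and part (1) identifies the irreducible quotient as a member of $\Pi_{\theta(\psi)}$.

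The main obstacle, in my estimation, is the Kudla-filtration step of part (1): one must verify that every intermediate term of the filtration contributes trivially to $\theta(\pi)$. This reduces to controlling Jacquet modules of $\sigma$ and $\pi_0$ along parabolics whose Levi has a $\GL_r(E)$ factor of the right size, and here the good-parity assumption on $\psi$ (which excludes ill-behaved self-duality of the non-tempered pieces) together with the precise numerical bound on $m-n$ jointly rule out any would-be intermediate contributions. Making this bookkeeping clean, via M\oe glin-Tadi\'c partial derivatives, is the technical heart of the argument.
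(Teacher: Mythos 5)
The paper does not prove this theorem: its proof consists of a single citation to M{\oe}glin \cite[Thm.~5.2]{MR2906916}, which treats the symplectic--orthogonal dual pair, followed by a Remark acknowledging that the extension to unitary dual pairs presupposes M{\oe}glin's explicit construction of A-packets for unitary groups. Your proposal instead sketches the substance of M{\oe}glin's argument, and the shape is right: reduction from good-parity A-packets to tempered L-packets via parabolic induction of Speh representations, Kudla's filtration to discard all but the top constituent under the numerical hypothesis on $m-n$, induction on rank, and for part (2) the tower property together with the conservation relation. What you gain over the paper's approach is a transparent account of what the citation hides. What you leave implicit --- and what the paper's own Remark identifies as the genuine caveat --- is that your Speh-reduction step already requires M{\oe}glin's explicit realization of unitary good-parity A-packets as an input; your sketch supplies no independent route to it, so your argument's dependence is the same as the paper's. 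Finally, the step you defer as ``bookkeeping'' is the hard part: showing that the intermediate terms of the Kudla filtration contribute nothing is a nontrivial Jacquet-module computation in which the bounds $m-n \geq b_i-a_i+1$ (for (1)) and $m-n > b_i+a_i-1$ (for (2)) enter at different stages, and an unconditional write-up would have to carry this out in the unitary setting, which to our knowledge has not appeared in print. So while your outline is faithful to the expected argument, it should be understood as a roadmap through M{\oe}glin's proof rather than a self-contained alternative.
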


\begin{proof}
This is \cite[Thm. 5.2]{MR2906916}.

\end{proof}

\vskip 5pt

\begin{Rmk}~
\begin{enumerate}
\item There is a caveat here: M{\oe}glin's result \cite[Thm. 5.2]{MR2906916} is for the symplectic-orthogonal dual pair. If one assumes M{\oe}glin's explicit construction of A-packets for unitary groups (both quasi-split and non quasi-split), then M{\oe}glin's proof of \cite[Thm. 5.2]{MR2906916} should also work for unitary dual pairs. \\

\item In later proofs of our main result, we will only use A-packets of unitary groups in some special cases; those A-packets are the Zelevinsky-Aubert dual of some tempered L-packets. Since the LLC for unitary groups has been fully established (see \cite{MR3338302,kaletha2014endoscopic,MR3839702,MR4324358}), our main result in this paper is not conditional on the construction of A-packets for unitary groups.
\end{enumerate} 
\end{Rmk}

\vskip 5pt

Recall that for each local A-parameter $\psi$, the local A-packet $\Pi_\psi(\U(V))$ is also equipped with a map (depending on the choice of the additive character $\psi_F$)
\[
    \calJ:\Pi_\psi(\U(V)) \longrightarrow \Irr A_\psi,
\]
where $A_\psi$ is the component group associated to $\psi$. For example, if $\psi$ is a local A-parameter of good parity as above, then
\[
    A_\psi = \sum_j \Z/2\Z \, a_j
\]
is a free $\Z/2\Z$-module with a canonical basis $\{a_j\}_j$, where $j$ runs over a representative set of inequivalent subrepresentations of $\psi$.

\begin{Thm} \label{T:AC-L}
In the context of Theorem \ref{T:AC-P}(1), let $\pi\in\Pi_\psi(\U(V))$ and $\eta$ the character of $A_\psi$ associated to $\pi$. If the theta lift $\theta(\pi)$ is non-zero, then it corresponds to the character $\theta(\eta)$ of $A_{\theta(\psi)}$, where $\theta(\eta)$ can be uniquely determined as follows:
\begin{itemize}
\item if $n$ and $m$ are of different parity, then
\[ 
\theta(\eta)~\Big|_{A_\psi} =\eta;
\]~

\item if $n$ and $m$ are of the same parity, then
\[
\theta(\eta)(a_j)/\eta(a_j)=\epsilon \left(\frac{1}{2}, \psi_j\chi_W^{-1}, \psi_{E,\delta}\right),
\]
where $a_j\in A_\psi$ is the basis element corresponding to the irreducible summand $\psi_j$ of $\psi$.
\end{itemize}
\end{Thm}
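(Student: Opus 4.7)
The plan is to combine M{\oe}glin's inductive construction of A-packets (via Jacquet modules and derivatives) with Kudla's filtration of the Weil representation, and then to track the normalization discrepancy through the splittings of the metaplectic cover. This is essentially a refinement of the Adams-type results proved by Gan--Ichino and Atobe in the tempered / L-packet setting, extended to the A-packet setting via M{\oe}glin's explicit construction.

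First, I would reduce to the edge-of-stable-range case $m-n=\max_i\{b_i-a_i+1:\rho_i\simeq \chi_W\}$. Moving up the Witt tower of $W$ by a hyperbolic plane $\calH'$ has the effect of replacing $\theta(\psi)$ by a new A-parameter whose component group grows in a controlled way, with the new basis element carrying a prescribed character value; consequently, any formula established at the edge case propagates up the tower. In particular, both assertions of the theorem are preserved under $W\mapsto W\oplus \calH'$, so it suffices to treat the case of first occurrence.

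Next, for the character calculation itself, I would use M{\oe}glin's description: each $\pi\in \Pi_\psi$ is obtained from a ``basepoint'' (the Langlands quotient of a suitable standard module) by iterated Jacquet module and derivative operations, and the character $\eta$ is read off from the combinatorics of which Jordan blocks are ``paired.'' Under theta lifting, Kudla's filtration of $\omega_{V,W}$ gives a recursive relation between the Jacquet modules of $\pi$ and of $\theta(\pi)$. This pins down $\theta(\pi)$ inside $\Pi_{\theta(\psi)}(\U(W))$, and hence determines $\theta(\eta)$ on the basis elements $a_j$ inherited from $A_\psi$. In the different-parity case, the new summand $\chi_V\boxtimes S_{m-n}$ has parity $(-1)^{m-1}\ne (-1)^{n-1}$, so it does not contribute a new basis element of $A_{\theta(\psi)}$; the component groups are canonically isomorphic, and M{\oe}glin's recipe gives $\theta(\eta)=\eta$ directly.

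In the same-parity case, $\chi_V\boxtimes S_{m-n}$ does contribute a new basis element, and one must compare the normalizations of the LLC for $\U(V)$ and $\U(W)$. These are defined via the Whittaker data associated to $(\psi_F,\chi_W)$ and $(\psi_F,\chi_V)$ respectively, and the two splittings of the metaplectic cover differ by a factor involving $\chi_V\chi_W^{-1}$. A seesaw/doubling argument, comparing the action of standard intertwining operators on either side, produces the ratio
\[
\theta(\eta)(a_j)/\eta(a_j)=\epsilon\!\left(\tfrac{1}{2},\psi_j\chi_W^{-1},\psi_{E,\delta}\right)
\]
on each old basis element $a_j$ corresponding to a summand $\psi_j$ of $\psi$; the value of $\theta(\eta)$ on the new basis element is then forced by the condition that it lies in the correct component group.

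The main obstacle is the last step: the careful epsilon-factor bookkeeping in the comparison of Whittaker normalizations and metaplectic splittings. This is precisely the kind of computation carried out in the tempered setting by Gan--Ichino (via the doubling zeta integrals and local gamma factors) and by Atobe; adapting it from L-packets to A-packets requires invoking M{\oe}glin's explicit construction of A-packets for unitary groups, as flagged in the remark following Theorem~\ref{T:AC-P}.
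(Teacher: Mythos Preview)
The paper does not actually give an argument here: its entire proof is the sentence ``This can be proved as in \cite[Sect.~7.4]{MR3788848}. See also \cite[Cor.~7.4]{ThetaNAPack}.'' So there is little to compare against beyond noting that your outline---reduce along the Witt tower, use Kudla's filtration to match Jacquet modules, and track the Whittaker/splitting normalizations via the Gan--Ichino epsilon computation---is a reasonable reconstruction of what those references do.

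That said, your outline contains a concrete error. In the different-parity case you assert that $\chi_V\boxtimes S_{m-n}$ ``has parity $(-1)^{m-1}\ne (-1)^{n-1}$, so it does not contribute a new basis element of $A_{\theta(\psi)}$; the component groups are canonically isomorphic.'' This reasoning is backwards: $\theta(\psi)$ is an A-parameter for $\U(W)$, so the relevant good-parity condition is parity $(-1)^{m-1}$, not $(-1)^{n-1}$. Using $\chi_V|_{F^\times}=\omega_{E/F}^{n}$ and the sign of $S_{m-n}$, one checks that $\chi_V\boxtimes S_{m-n}$ has parity $(-1)^{m-1}$ regardless of whether $m\equiv n\pmod 2$, hence (barring the coincidence that it already occurs in $\psi\chi_W^{-1}\chi_V$) it \emph{always} gives a new basis element of $A_{\theta(\psi)}$. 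What makes $\theta(\eta)$ ``uniquely determined'' by its restriction to $A_\psi$ is not an isomorphism of component groups but the central-element constraint (the value of $\theta(\eta)$ on $z_{\theta(\psi)}$ is fixed by the sign $\epsilon(W)$), which forces the value on the extra generator. You gesture at such a constraint at the end of your same-parity discussion, but the phrasing ``lies in the correct component group'' is not quite right, and in the different-parity case you omit it entirely because of the false isomorphism claim.
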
   

\begin{proof}
This can be proved as in \cite[Sect. 7.4]{MR3788848}. See also \cite[Cor. 7.4]{ThetaNAPack}.

\end{proof}

\vskip 5pt
\subsection{\bf A result of Atobe}

The following lemma, which is essentially due to Atobe, is useful to us in the later proofs. Let $\nu$ be the normalized absolute value of $E^\times$. 

\begin{Lem}\label{Length2}
Let $G_0=\U(V_0)$ be the unitary group associated to some Hermitian (or skew-Hermitian) space $V_0$, and $\psi_0$ an A-parameter of $G_0$. Suppoe that $\psi_0$ is elementary and trivial on the Weil-Deligne $\SL_2$. Let $\rho$ be an irreducible representation of $W_E$ and $x\in \frac{1}{2}\Z$ positive such that
\[
    \rho\boxtimes S_{2x-1}\subset\psi_0\quad \textit{and} \quad \rho\boxtimes S_{2x+1}\not\subset\psi_0.
\]
Then for any $\pi_0\in\Pi_{\psi_0}(G_0)$, we have a non-split exact sequence:
\[
    0 \longrightarrow \pi \longrightarrow \rho\nu^{-x}\rtimes \pi_0 \longrightarrow \pi' \longrightarrow 0,
\]
where $\pi$ is the unique irreducible subrepresentation and $\pi'$ is the unique irreducible quotient of $\rho\nu^{-x}\rtimes \pi_0$. In particular, the length of the induced representation $\rho\nu^{-x}\rtimes \pi_0$ is $2$.
\end{Lem}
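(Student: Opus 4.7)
The strategy is to combine M\oe glin's explicit construction of A-packets attached to elementary parameters (carried out for unitary groups by Atobe--M\'inguez and Xu) with a Jacquet module computation. Because $\psi_0$ is elementary and trivial on the Weil--Deligne $\SL_2$, each $\pi_0\in\Pi_{\psi_0}(G_0)$ is realized as the unique irreducible subrepresentation of an explicitly described standard module built from segment-type pieces, which makes its derivatives computable.

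First I would introduce the auxiliary elementary A-parameter
\[
\psi_0'\;=\;(\psi_0-\rho\boxtimes S_{2x-1})+\rho\boxtimes S_{2x+1}
\]
on the larger group $G:=\U(V_0\perp\calH^{\dim\rho})$, together with the canonical identification $A_{\psi_0}\simeq A_{\psi_0'}$ of component groups. Using M\oe glin's construction, I would identify two elements $\pi,\pi'\in\Pi_{\psi_0'}(G)$, corresponding to the characters $\eta_{\pi_0}$ and $\eta_{\pi_0}\cdot\epsilon$ of $A_{\psi_0'}$ (where $\epsilon$ is the sign character on the basis element indexed by the $\rho$-block), as the prospective socle and cosocle of $\rho\nu^{-x}\rtimes\pi_0$. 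The existence of a unique irreducible subrepresentation (and, dually, of a unique irreducible quotient) is guaranteed by the Langlands classification, since with $x>0$ the module $\rho\nu^{-x}\rtimes\pi_0$ is the contragredient of the standard module $\rho^\vee\nu^x\rtimes\pi_0^\vee$. Matching sub and quotient with the two A-packet elements above then shows $\pi\not\simeq\pi'$.

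Next I would bound the length by a Jacquet module argument. The hypothesis $\rho\boxtimes S_{2x+1}\not\subset\psi_0$ translates, via M\oe glin--Xu's derivative formulas for A-packet elements, into the vanishing of the $\rho\nu^{-x}$-derivative of $\pi_0$. Combining this with Tadi\'c's geometric lemma for parabolic induction from $\GL_{\dim\rho}(E)\times G_0$, the multiplicity of $\rho\nu^{-x}\otimes\pi_0$ in the Jacquet module $J_P(\rho\nu^{-x}\rtimes\pi_0)$ is at most $2$ (with contributions coming only from the identity and long-Weyl-element double cosets). By Frobenius reciprocity $\dim\operatorname{End}(\rho\nu^{-x}\rtimes\pi_0)\leq 2$, so the length is at most $2$, hence exactly $2$. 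The sequence $0\to\pi\to\rho\nu^{-x}\rtimes\pi_0\to\pi'\to 0$ cannot split, for otherwise $\pi'$ would provide a second irreducible subrepresentation, contradicting uniqueness of the socle.

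The main obstacle I anticipate is justifying the derivative vanishing $\Jac_{\rho\nu^{-x}}(\pi_0)=0$, which rests on a careful analysis of how the Bernstein--Zelevinsky Jacquet functors interact with M\oe glin's inductive construction of A-packet elements. The absence of $\rho\boxtimes S_{2x+1}$ from $\psi_0$ is exactly the combinatorial condition forbidding an extra copy of $\rho\nu^{-x}$ from being pulled out of $\pi_0$ via the derivative; verifying this cleanly in the unitary setting is where most of the technical work will lie.
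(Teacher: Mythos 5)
Your proposal diverges substantially from the paper's proof, and it contains a genuine gap. The paper's argument is a two-line reduction: since $\psi_0$ is elementary and trivial on the Weil--Deligne $\SL_2$, its Aubert dual $\phi_0 = \widehat{\psi_0}$ (obtained by swapping the two $\SL_2$-factors) is a \emph{discrete L-parameter}, so $\widehat{\pi_0}$ is a discrete series representation, and one can directly invoke Atobe's Lemma 5.1 of \cite{MR4055180}, which establishes exactly this socle/cosocle and length-two structure for $\rho\nu^{-x}\rtimes\sigma$ with $\sigma$ discrete series. The statement then transfers back through Aubert duality, which commutes with parabolic induction and preserves lengths. Your proposal instead attempts to reprove this structural result from scratch on the A-packet side, which is far more laborious and is precisely why the paper ducks behind the duality.

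The concrete gap in your argument is the Langlands classification step. You assert that $\rho^\vee\nu^{x}\rtimes\pi_0^\vee$ is a standard module, so that its contragredient $\rho\nu^{-x}\rtimes\pi_0$ has a unique irreducible subrepresentation. But a standard module requires the inducing representation to be \emph{tempered} (twisted by a positive exponent), and $\pi_0$ is a member of a genuine A-packet, hence in general an Aubert dual of a discrete series and \emph{not} tempered (e.g.\ already for $\psi_0 = \rho\boxtimes S_3$). So the Langlands classification gives you nothing here; this is exactly the obstruction that Aubert duality is designed to route around, by converting $\pi_0$ into a tempered representation where the standard-module machinery applies. Separately, your inference ``$\dim\operatorname{End}(\rho\nu^{-x}\rtimes\pi_0)\leq 2$, so the length is at most $2$'' does not hold as stated: a uniserial module of arbitrary length with pairwise non-isomorphic composition factors has $1$-dimensional endomorphism algebra, so bounding $\operatorname{End}$ does not bound length. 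You would need to show that \emph{every} irreducible constituent of $\rho\nu^{-x}\rtimes\pi_0$ contributes $\rho\nu^{-x}\otimes\pi_0$ to the Jacquet module along $P$, which is an additional argument you do not supply (and which, again, Atobe carries out in the discrete-series setting). You are candid that the derivative-vanishing step is where the real work lies; I would add that the Langlands step above is an outright error rather than merely a gap. The Aubert-duality shortcut removes both difficulties in one stroke.
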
 

\vskip 5pt

\begin{proof}
Let $\phi_0=\widehat{\psi_0}$ be the Aubert dual of $\psi_0$, which is a discrete L-parameter for $G_0$. Then apply \cite[Lem. 5.1]{MR4055180} to $\widehat{\pi_0}\in\Pi_{\phi_0}(G_0)$.

\end{proof}

\vskip 5pt

\begin{Rmk}
In fact, Atobe only considered split odd orthogonal groups and symplectic groups in \cite{MR4055180}. But at least his Lemma 5.1 should be valid for unitary groups as well; the proof of the unitary case is totally the same as Atobe's proof of orthogonal/ symplectic case.
\end{Rmk}

\vskip 5pt

\subsection{Some local A-packets}
Now we use the Adams' conjecture to describe Weil representations. Let $E^1$ be the subgroup of $E^\times$ consists of norm $1$ elements. Let $\chi_0$ be a character of $E^1$ and $\chi$ the character of $E^\times$ obtained from $\chi_0$ by base change; we may regard $\chi$ as the L-parameter of the unitary group $E^1$ corresponding to $\chi_0$. We denote by $\omega_{V,\mu}[\chi]$ the maximal subrepresentation of $\omega_{V,\mu}$ such that the center of $\U(V)$ acts by $\chi_0$. When $n=1$, the representation $\omega_{V,\mu}[\chi]$ has been studied by \cite{MR876022} and \cite{MR1155235}. So we shall concentrate on the case $n \geq 2$.

\begin{Lem}\label{L:WR-L}
~
\begin{enumerate}
    \item If $n=2$ and $\chi=\mu^2$, then the representation $\omega_{V,\mu}[\chi]$ is non-zero only when the space $V$ is of sign $+1$, in which case $\omega_{V,\mu}[\chi]$ is the generic member (with respect to the generic datum defined by $\psi_F$) in the L-packet $\Pi_\phi(\U(V))$, where
    \[
        \phi=\mu+\mu.
    \]
    \vskip 5pt

    \item For any $n=\dim V\geq 2$, excluding the special case above, the representation $\omega_{V,\mu}[\chi]$ is non-zero, irreducible and unitary. It lies in the A-packet $\Pi_{\Psi}(\U(V))$, where 
\[
    \Psi=\chi\cdot\mu^{-n+1}+\mu\boxtimes S_{n-1}.
\]
The character $\eta\in\Irr A_\Psi$ associated to $\omega_{V,\mu}[\chi]$ is
\[
    \eta:(e_1, e_{n-1})\longmapsto
    \begin{cases}
    (1,\epsilon(V)) \quad &\textit{if $n$ is even,}\\
    \\
    \Big(\epsilon\left(\frac{1}{2},\chi\mu^{-n},\psi_{E,\delta}\right)\,,\,\epsilon(V)\epsilon\left(\frac{1}{2},\chi\mu^{-n},\psi_{E,\delta}\right)\Big) \quad &\textit{if $n$ is odd.}
    \end{cases}
\]
Here $e_1$ and $e_{n-1}$ are the basis elements of $A_{\Psi}$ corresponding to $\chi\cdot\mu^{-n+1}$ and $\mu\boxtimes S_{n-1}$ respectively.
\end{enumerate}

\end{Lem}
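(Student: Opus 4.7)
The plan is to realize $\omega_{V,\mu}[\chi]$ as a big theta lift from the small unitary group $\U(W') = E^1$, where $W'$ is the $1$-dimensional Hermitian space, and then apply the refinement of Adams' conjecture recorded in Theorems~\ref{T:AC-P} and \ref{T:AC-L}. Concretely, I form the dual pair $\U(V) \times \U(W')$ with splitting characters $(\chi_V, \chi_{W'}) = (\mu^n, \mu)$, which is legitimate because $\mu$ is conjugate-symplectic so $\mu^n|_{F^\times} = \omega_{E/F}^n$. Since $\dim W' = 1$, the restriction $\omega_{V,W'}|_{\U(V)}$ coincides with $\omega_{V,\mu}$, and the joint decomposition under the central $E^1 \subset \U(V)$ and under $\U(W') = E^1$ identifies (up to an explicit twist by $\chi_V|_{E^1}$) each isotypic component $\omega_{V,\mu}[\chi]$ with the big theta lift $\Theta(\chi_0)$ of the character $\chi_0$ of $E^1$ whose base change to $E^\times$ equals $\chi$.

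Applying Theorem~\ref{T:AC-P} (with the roles of $V$ and $W$ in its statement played by $W'$ and $V$) to the source parameter $\psi = \chi$ --- a conjugate-orthogonal character, hence of good parity on $\U(W')$ --- the numerical bound reads $n - 1 \geq 1$ when $\chi = \mu^n$ and is vacuous otherwise, so it holds for all $n \geq 2$. Thus $\Theta(\chi_0)$ lies in the A-packet $\Pi_\Psi(\U(V))$ with
\[
    \Psi \;=\; \chi\,\mu^{-n+1} + \mu \boxtimes S_{n-1},
\]
matching the claimed parameter. The sharper bound of Theorem~\ref{T:AC-P}(2) fails exactly in the excluded case $n = 2$, $\chi = \mu^2$, so under the hypothesis of (2) the lift is non-zero for both $V^{\pm}$. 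Irreducibility then follows from unitarity (as a subrepresentation of the unitary Weil representation) combined with finite length (Howe duality): a unitary finite-length representation is semisimple, so $\omega_{V,\mu}[\chi] = \Theta(\chi_0)$ coincides with its unique irreducible quotient $\theta(\chi_0)$.

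The character $\eta \in \Irr A_\Psi$ is then obtained from Theorem~\ref{T:AC-L}. The source component group is trivial, so its basis element carries $\eta = 1$ and is paired with $e_1 \in A_\Psi$ (attached to the summand $\chi\mu^{-n+1}$). When $n$ is even, the source and target dimensions have different parity, giving $\eta(e_1) = 1$; when $n$ is odd, the same-parity formula with $\psi_j = \chi$ and $\chi_W = \mu^n$ yields $\eta(e_1) = \epsilon(1/2, \chi\mu^{-n}, \psi_{E,\delta})$. The value on the ``new'' basis element $e_{n-1}$ attached to $\mu \boxtimes S_{n-1}$ is then pinned down by the sign constraint linking $\eta$ to the target sign in the unitary LLC/A-packet parametrization, namely $\eta(e_1)\eta(e_{n-1}) = \epsilon(V)$. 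Combining these determinations recovers exactly the formulas in part (2).

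Finally, for part (1), the parameter formula still produces $\Psi = \mu + \mu$, a tempered L-parameter with $|A_\Psi| = 2$, but Theorem~\ref{T:AC-P}(2) no longer guarantees non-vanishing. Here I would argue by Kudla--Rallis conservation plus theta dichotomy that precisely one of $V^{\pm}$ supports a non-zero lift, then identify the winner as $V^+$ via an epsilon-factor/Whittaker calculation; the resulting element of $\Pi_{\mu+\mu}$ is the $\psi_F$-generic member by the standard compatibility between theta lifting and Whittaker models at the boundary of the stable range. The main technical hurdle I anticipate is justifying the sign constraint used in the previous paragraph when $n \geq 3$ and $\Psi$ is non-tempered; the most transparent route passes through Aubert duality to the tempered discrete L-parameter $\chi\mu^{-n+1} + \mu\,S_{n-1}$ (with $S_{n-1}$ on the Weil--Deligne $\SL_2$), where the relation $\eta(z_\phi) = \epsilon(V)$ is standard, and matching component-group conventions across this duality is the delicate point --- but this is already carried out in the framework of \cite{MR3788848} cited in the proof of Theorem~\ref{T:AC-L}.
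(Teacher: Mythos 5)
Your proposal is correct and takes essentially the same route as the paper: realize $\omega_{V,\mu}[\chi]$ as the (big) theta lift of a character of $E^1 = \U(L_1)$ for the pair $\U(L_1)\times\U(V)$, then feed this into Theorems~\ref{T:AC-P} and \ref{T:AC-L}. The paper's own proof is a three-line version of exactly this (it keeps the splitting character $\chi_V$ arbitrary, identifying the source parameter as $\chi\mu^{-n}\chi_V$, whereas you normalize $\chi_V = \mu^n$ so that the source parameter is just $\chi$); your extra paragraphs on irreducibility, the sign constraint $\eta(e_1)\eta(e_{n-1}) = \epsilon(V)$, and the dichotomy argument for part~(1) are details the paper leaves implicit rather than a genuinely different strategy.
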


\vskip 5pt

\begin{proof}
Let $L_1$ be the $1$-dimensional Hermitian space associated to $1\in F^\times$. Let $\chi_V$ be a character of $E^\times$ such that $\chi_V~\big|_{F^\times}=\omega_{E/F}^n$, and $\Omega_{L_1,V}$ the Weil representation associated to $\U(L_1)\times \U(V)$ with respect to the splitting character $(\mu,\chi_V)$. Then we have 
\[
    \Omega_{L_1,V}~\Big|_{\U(V)}=\omega_{V,\mu}.
\]
Hence $\omega_{V,\mu}[\chi]$ can be regarded as the theta lift of the character $\chi\mu^{-n}\chi_V$. Thus our first assertion follows from Theorem \ref{T:AC-P}, and the second follows from Theorem \ref{T:AC-L}.

\end{proof}

\vskip 5pt

\subsection{Irreducibility of big theta lifts}
Finally we investigate the irreducibility of the big theta lift of $\omega_{V,\mu}[\chi]$. We shall work in a slightly more general setting. 

\vskip 5pt

We retain the notations of Section \ref{SS:AC}. From now on we assume that $m$ is even and $m\geq\max\{2n-2,n\}$. Let
\[
    \psi=\delta+\mu\boxtimes S_{n-1}
\]
be a local A-parameter of $\U(V)$, where $\delta$ and $\mu$ are conjugate self-dual characters of appropriate parities. Our goal is to show the following.

\begin{Thm}\label{ThmBigThetaEnhanced}
For any $\pi\in\Pi_\psi(\U(V))$, the big theta lift $\Theta(\pi)$ to $\U(W)$ is irreducible if it is non-zero. Moreover, we have
\[
    \Ext_{\U(V)}^i(\Omega,\pi)_{sm}=0
\]
for all $i>0$. Here $\Omega$ is the Weil representation associated to $\U(V)\times\U(W)$, and the subscript ``$sm$'' stands for taking the $\U(W)$-smooth vectors. 
\end{Thm}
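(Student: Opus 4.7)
The plan is to realize $\Theta(\pi)$ as a submodule of an induced representation of length $2$ and then identify it as the unique irreducible submodule, using Lemma \ref{Length2} (Atobe). First, by Theorem \ref{T:AC-P}, $\theta(\pi)$ is nonzero and lies in $\Pi_{\theta(\psi)}(\U(W))$, with
\[
\theta(\psi) = \delta\chi_W^{-1}\chi_V + \mu\chi_W^{-1}\chi_V \boxtimes S_{n-1} + \chi_V \boxtimes S_{m-n},
\]
and it is the unique irreducible quotient of $\Theta(\pi)$. Showing $\Theta(\pi)$ is irreducible therefore reduces to ruling out any further Jordan--H\"older constituent.

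I would argue by induction on $m$, with base case the smallest even $m$ for which the hypotheses hold, where the claim can be verified directly from Howe duality together with stable-range considerations. For the inductive step, I would exploit the seesaw relating $(\U(V), \U(W))$ with $(\U(V\oplus\calH), \U(W_{m-2}))$, where $\calH$ is a skew-Hermitian hyperbolic plane and $W_{m-2}$ is $W$ with a Hermitian hyperbolic plane removed. Combined with Kudla's filtration of the Jacquet module of $\Omega$ along the maximal parabolic of $\U(W)$ with Levi $\GL_1 \times \U(W_{m-2})$, this setup produces an embedding
\[
\Theta(\pi) \hookrightarrow \chi_V\nu^{(m-1)/2} \rtimes \Theta_{m-2}(\pi),
\]
where $\Theta_{m-2}(\pi)$ denotes the big theta lift of $\pi$ to $\U(W_{m-2})$.

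By the inductive hypothesis $\Theta_{m-2}(\pi)=\theta_{m-2}(\pi)$ is irreducible, lying in $\Pi_{\theta_{m-2}(\psi)}(\U(W_{m-2}))$. Since $\theta_{m-2}(\psi)$ contains $\chi_V\boxtimes S_{m-n-2}$ but not $\chi_V\boxtimes S_{m-n}$, Lemma \ref{Length2} applied with $\rho=\chi_V$ and $x=(m-n-1)/2$ shows that $\chi_V\nu^{(m-1)/2}\rtimes\theta_{m-2}(\pi)$ has length exactly $2$, with a unique irreducible subrepresentation and a unique irreducible quotient. Using Theorem \ref{T:AC-L} to match A-parameters and component group characters, I identify this unique irreducible submodule with $\theta(\pi)$. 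Then $\Theta(\pi)$, being a nonzero submodule of the length-$2$ module that surjects onto $\theta(\pi)$, must coincide with either the whole length-$2$ module or with the unique irreducible submodule $\theta(\pi)$. A Howe-duality argument excludes the former: any second constituent $\sigma$ of $\Theta(\pi)$ would force $\pi$ to appear in the $\U(V)$-side theta lift of $\sigma$, contradicting the A-parameter of $\sigma$ dictated by the length-$2$ picture.

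For the $\Ext$-vanishing I would reuse the same Kudla filtration, now viewing $\Omega$ as a $\U(V)$-module. The filtration provides a short exact sequence expressing $\Omega$ in terms of smaller Weil representations parabolically induced to $\U(V)$, and the associated long exact sequence for $\Ext^{\bullet}_{\U(V)}(-,\pi)_{sm}$, combined with Frobenius reciprocity and Bernstein's second adjointness, reduces the claim to vanishing for smaller Weil representations paired with Jacquet modules of $\pi$; the inductive hypothesis then closes the argument. The main obstacle is the matching step above: one must verify via a careful Aubert duality computation, and separately in the two cases of Theorem \ref{T:AC-L} (where $n$ and $m$ have different versus the same parity), that $\theta(\pi)$ is indeed the irreducible \emph{submodule} rather than the irreducible quotient of $\chi_V\nu^{(m-1)/2}\rtimes\theta_{m-2}(\pi)$.
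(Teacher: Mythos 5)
Your plan inducts on $m=\dim W$ with $V$ and $\pi$ held fixed, whereas the paper inducts on $n=\dim V$ (via M{\oe}glin's Lemma \ref{L:PJM}, embedding $\pi\hookrightarrow\mu\nu^{x_n}\rtimes\pi_0$ with $\pi_0$ on $V_0$ of dimension $n-2$ and simultaneously shrinking $W$ to $W_0$ so that $m-n$ is preserved). This difference is fatal to your base case: your descent $m\to m-2$ bottoms out at the smallest even $m$ with $m\geq 2n-2$, i.e.\ $m=2n-2$, which is strictly below the stable range $m\geq 2n$, so irreducibility of $\Theta(\pi)$ there cannot be dispatched by ``stable-range considerations.'' The paper's induction instead terminates at supercuspidal $\pi$ or $n=0$, where the assertions are genuinely elementary. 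Moreover, the embedding $\Theta(\pi)\hookrightarrow\chi_V\nu^{(m-1)/2}\rtimes\Theta_{m-2}(\pi)$ that drives your step is not freely available: Kudla's filtration of $R_Q\Omega$ (for $Q\subset\U(W)$) together with Frobenius reciprocity yields a nonzero map $\Theta(\pi)\to\chi_V\nu^s\rtimes\Theta_{m-2}(\pi)$, not \emph{a priori} an injection. In the paper, the analogous embedding of $\Theta(\pi)^\vee$ is extracted only \emph{after} the higher $\Ext$-groups are shown to vanish, which forces exactness of the $\Hom$-sequence; by treating irreducibility and $\Ext$-vanishing as independent problems you have lost this prerequisite.

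There are two further issues. First, you correctly flag the need to decide whether $\theta(\pi)$ is the irreducible submodule or quotient of the length-$2$ induced module as ``the main obstacle,'' and propose to settle it by an Aubert-duality computation. The paper's proof sidesteps this entirely: combining the two exact sequences $0\to\Theta(\pi)^\vee\to\left(\mu'\right)^c\nu^{x_n}\rtimes\Theta(\pi_0)^\vee\to\Theta(\pi')^\vee\to 0$ and its MVW-dual, it shows that if $\Theta(\pi')$ were zero then $\theta(\pi)^\vee$ would be simultaneously the socle and the cosocle of the same length-$2$ module, contradicting the non-isomorphism of the two constituents guaranteed by Lemma \ref{Length2}; no labelling computation is needed, and it also handles the edge case $m-n=1$, $\delta=\chi_W$ where $\theta(\psi_0)$ fails to be elementary. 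Second, your $\Ext$-vanishing sketch (``reuse the same Kudla filtration, now viewing $\Omega$ as a $\U(V)$-module'') is internally inconsistent with your irreducibility step: the filtration you invoked there is of the Jacquet module along a $\U(W)$-parabolic and is not a resolution of $\Omega$ as a $\U(V)$-module. Computing $\Ext^\bullet_{\U(V)}(\Omega,\pi)_{sm}$ requires the $\U(V)$-side filtration of $R_P\Omega$ with $P\subset\U(V)$, together with M{\oe}glin's embedding of $\pi$ — which is precisely the paper's inductive scheme. If you switch filtrations here you have abandoned your induction on $m$; as written, the two halves of your proposal are running two incompatible inductions.
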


\vskip 5pt

\begin{Rmk}
Although in this theorem we do not assert the non-vanishing of $\Theta(\pi)$, in the range we are considering (i.e. $m\geq 2n-2$ and $n\geq 2$), we are almost always in the situation of Theorem \ref{T:AC-P}(2), except for the following low rank cases:
\begin{itemize}
    \item $n=2$ and $m=2$ (this case will not be used in the proof of our main theorem);
    \vskip 5pt

    \item $n=3$, $m=4$ and $\delta=\chi_W$.
\end{itemize}
\end{Rmk}

\vskip 5pt

We shall prove this theorem by induction on the dimension of $V$. Let $x_n=-n/2+1$. According to M{\oe}glin \cite[Sect. 2.4]{MR2209850}, we know that:
\begin{Lem} \label{L:PJM}
Assume that $n\geq 3$, and let $\pi\in\Pi_\psi(\U(V))$. If $\pi$ is not supercuspidal, then there exists a unique $\pi_0\in\Pi_{\psi_0}(\U(V_0))$, such that 
\[
    \pi\hookrightarrow\mu\nu^{x_n}\rtimes\pi_0.
\]
Here
\[
    \psi_0=\delta+\mu\boxtimes S_{n-3},
\] 
and $V_0$ is a subspace of $V$ such that $V\simeq V_0 \oplus \calH$.
\end{Lem}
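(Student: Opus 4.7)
The plan is to invoke M\oe glin's explicit construction of A-packets for classical groups, which is the content of \cite[Sect.~2.4]{MR2209850} already cited in the lemma. In that construction, the A-packet $\Pi_\psi$ attached to an elementary parameter $\psi$ is built recursively by ``shortening'' the Arthur $\SL_2$-segments appearing in $\psi$, with supercuspidal members of suitable smaller A-packets forming the base case of the recursion. For $\psi = \delta + \mu \boxtimes S_{n-1}$ the relevant shortening replaces $\mu \boxtimes S_{n-1}$ by $\mu \boxtimes S_{n-3}$, peeling off the two extremal exponents $\pm (n-2)/2$ of the Langlands-side segment. The extremal negative exponent is precisely $x_n = -(n-2)/2 = -n/2+1$, which matches the exponent appearing in the lemma.

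For existence, M\oe glin's recipe produces, for every $\pi_0 \in \Pi_{\psi_0}(\U(V_0))$ in a suitable subset, a distinguished $\pi \in \Pi_\psi(\U(V))$ realized as the socle of $\mu\nu^{x_n} \rtimes \pi_0$, and conversely every non-supercuspidal member of $\Pi_\psi(\U(V))$ arises in this way. The point is that the recursive characterization of the packet forces any $\pi$ that is not supercuspidal to embed in such an induced representation; otherwise the M\oe glin construction would place $\pi$ at the supercuspidal base of the recursion, contradicting our assumption. I would therefore first state this existence as a direct consequence of the recipe, making sure the compatibilities between the component group inclusion $A_{\psi_0} \hookrightarrow A_\psi$ and the labelling of packet members are as predicted.

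For the uniqueness of $\pi_0$, I would analyze the Jacquet module $\Jac_{\mu\nu^{x_n}}(\pi)$ along the parabolic $P$ with Levi $\GL_1(E) \times \U(V_0)$: by Frobenius reciprocity, any $\pi_0$ with $\pi \hookrightarrow \mu\nu^{x_n} \rtimes \pi_0$ must appear as a quotient of this Jacquet module. Using the M\oe glin--Tadi\'c-type Jacquet module formulas for members of A-packets applied to our specific elementary $\psi$, one verifies that the $\mu\nu^{x_n}$-isotypic piece of $\Jac_P(\pi)$ contains at most one constituent lying in $\Pi_{\psi_0}(\U(V_0))$, which yields uniqueness. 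The main technical obstacle is precisely this Jacquet module bookkeeping: one must rule out the appearance of a second $\pi_0' \in \Pi_{\psi_0}$ in $\Jac_P(\pi)$, and this relies on the detailed combinatorics of M\oe glin's construction for unitary groups. In practice one appeals to \cite[Sect.~2.4]{MR2209850} for the heavy lifting, verifying by hand only that the exponent $x_n = -n/2+1$ is the correct extremal exponent for the segment $\mu \boxtimes S_{n-1}$, which is an easy check.
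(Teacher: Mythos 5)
Your proposal is correct and follows the same route the paper takes: the paper gives no proof at all, simply asserting the lemma as a direct consequence of M{\oe}glin's explicit construction in \cite[Sect.~2.4]{MR2209850}, which is exactly what you invoke. Your additional Jacquet-module reasoning for uniqueness and the verification that $x_n=-(n-2)/2$ is the extremal exponent are sensible expository fleshing-out of that same citation, not a different argument.
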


\vskip 5pt

Using this fact, we now do the induction step.
\begin{Prop}\label{P:ThetaEnhancedInd}
In the context of Lemma \ref{L:PJM}, assume that
\[
    \pi\hookrightarrow\mu\nu^{x_n}\rtimes\pi_0.
\]
Let $W_0$ be a subspace of $W$ such that $W\simeq W_0 \oplus \calH'$. Consider the theta correspondence of $\U(V_0)\times\U(W_0)$ (with respect to the same splitting characters). Then if Theorem \ref{ThmBigThetaEnhanced} holds for $\pi_0$, it also holds for $\pi$.
\end{Prop}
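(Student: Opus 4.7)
The plan is to use the induction principle for theta lifts, implemented via Kudla's filtration of the Jacquet module of the Weil representation $\Omega=\omega_{V,W}$. Let $P = M_PN_P \subset \U(V)$ be the maximal parabolic stabilizing an isotropic line, with Levi $M_P \simeq \GL_1(E)\times\U(V_0)$, and let $Q\subset\U(W)$ be the analogous parabolic for $W$ with Levi $\GL_1(E)\times\U(W_0)$. By Frobenius reciprocity,
\[
\Hom_{\U(V)}(\Omega,\mu\nu^{x_n}\rtimes\pi_0) \simeq \Hom_{M_P}\bigl(J_{\overline{P}}(\Omega),\mu\nu^{x_n}\boxtimes\pi_0\bigr),
\]
and Kudla's filtration produces a short exact sequence of $(M_P\times\U(W))$-modules
\[
0\longrightarrow R_1 \longrightarrow J_{\overline{P}}(\Omega) \longrightarrow R_0 \longrightarrow 0,
\]
in which $R_0$ is essentially $\chi_V\nu^{s_0}\boxtimes\Omega_{V_0,W}$ for an explicit exponent $s_0$, while $R_1$ is obtained by parabolic induction along $Q$ from $\chi_V\nu^{s_1}\boxtimes\Omega_{V_0,W_0}$ for an explicit $s_1$. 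This is the first of the two seesaw arrangements mentioned in the introduction.

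I would first verify, via a direct weight computation using $x_n=-n/2+1$ and the hypothesis $m\geq 2n-2$, that the character $\mu\nu^{x_n}$ cannot match the $\GL_1(E)$-eigencharacter on $R_0$. This eliminates the $R_0$-piece and yields, after a second application of Frobenius reciprocity along $Q$ (the second seesaw), the key embedding
\[
\Theta_W(\mu\nu^{x_n}\rtimes\pi_0)\hookrightarrow \mu\nu^{x_n}\rtimes\Theta_{W_0}(\pi_0).
\]
By the induction hypothesis, $\Theta_{W_0}(\pi_0)=\theta_{W_0}(\pi_0)$ is irreducible, and by Theorem~\ref{T:AC-P} it lies in the A-packet attached to $\theta(\psi_0)$. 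Applying Lemma~\ref{Length2} to this A-parameter with $\rho=\mu$ at the appropriate shift, the induced representation $\mu\nu^{x_n}\rtimes\theta_{W_0}(\pi_0)$ has length exactly $2$, with a unique irreducible submodule and a unique irreducible quotient. Since $\pi\hookrightarrow\mu\nu^{x_n}\rtimes\pi_0$, the canonical surjection $\Omega\twoheadrightarrow\Theta_W(\pi)\boxtimes\pi$ factors so as to embed $\Theta_W(\pi)$ into this length-two composition series, forcing $\Theta_W(\pi)$ to coincide with one of its two irreducible constituents, hence to be irreducible.

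For the Ext-vanishing part, I would apply the functor $\Ext^i_{M_P}(-,\mu\nu^{x_n}\boxtimes\pi_0)$ to the Kudla filtration. The induction hypothesis $\Ext^i_{\U(V_0)}(\Omega_{V_0},\pi_0)_{sm}=0$ for $i>0$ handles the contribution from $R_1$ (after using exactness of parabolic induction and transitivity of Ext across Frobenius reciprocity), while the same weight-incompatibility argument as above handles $R_0$ for every $i$. This gives $\Ext^i_{\U(V)}(\Omega,\mu\nu^{x_n}\rtimes\pi_0)_{sm}=0$ for $i>0$. Combining with the long exact Ext-sequence for $0\to\pi\to\mu\nu^{x_n}\rtimes\pi_0\to\pi'\to 0$, together with the irreducibility of $\Theta_W(\pi)$ (and the analogous statement for $\pi'$, proved identically) to control the boundary terms, one concludes the required vanishing for $\pi$.

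The main obstacle is the weight/character-matching step that eliminates the $R_0$-piece: this must be verified using the explicit exponents appearing in Kudla's filtration, the precise value $x_n=-n/2+1$, and the range $m\geq 2n-2$. In the low-rank exceptional cases flagged in the Remark after Theorem~\ref{ThmBigThetaEnhanced} (e.g.\ $n=3$, $m=4$, $\delta=\chi_W$), the generic weight argument may degenerate and one may need to argue the irreducibility directly, either by invoking the explicit structure of the A-packet via Theorem~\ref{T:AC-L} or by a separate Jacquet module computation.
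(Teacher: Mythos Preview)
Your setup via Kudla's filtration and the verification that the top piece $R_0$ contributes nothing (since $x_n\ne\frac{m-n+1}{2}$ under the hypothesis $m\geq 2n-2$) is correct and matches the paper. The gap is in the second half, and it affects both the irreducibility claim and the Ext-vanishing.

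For irreducibility: knowing that $\Theta(\pi)^\vee$ embeds in a length-$2$ representation does \emph{not} force it to be irreducible; it could be the whole thing. Concretely, from the single long exact sequence you write down one only obtains
\[
0\longrightarrow \Theta(\pi)^\vee \longrightarrow (\mu')^c\nu^{x_n}\rtimes\Theta(\pi_0)^\vee \longrightarrow \Theta(\pi')^\vee \longrightarrow \Ext^1(\Omega,\pi)_{sm}\longrightarrow 0,
\]
and nothing you have established rules out $\Theta(\pi')=0$. The paper closes this by producing the \emph{second} short exact sequence
\[
0\longrightarrow \pi' \longrightarrow \mu\nu^{-x_n}\rtimes\pi_0 \longrightarrow \pi \longrightarrow 0
\]
(obtained from the first by applying contragredient and the MVW involution), and then argues by contradiction: if $\Theta(\pi')=0$, the two resulting identities force $(\mu')^c\nu^{x_n}\rtimes\Theta(\pi_0)^\vee$ to have both socle and cosocle equal to $\theta(\pi)^\vee$, contradicting Lemma~\ref{Length2}. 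Your sketch never invokes this dual sequence, so the irreducibility step is incomplete. (Your remark that the statement for $\pi'$ is ``proved identically'' runs into the same problem: identically, you only get that $\Theta(\pi')$ sits inside a length-$2$ object.)

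For Ext-vanishing: from your single long exact sequence together with $\Ext^i(\Omega,\mu\nu^{x_n}\rtimes\pi_0)_{sm}=0$ for $i>0$ you only get $\Ext^i(\Omega,\pi')_{sm}\simeq\Ext^{i+1}(\Omega,\pi)_{sm}$ for $i\geq 1$, which by itself is not vanishing. Invoking ``irreducibility of $\Theta(\pi)$ to control the boundary terms'' is circular, since that irreducibility is exactly what remains to be shown (and, as above, it is equivalent to the surjectivity of the degree-$0$ map, i.e.\ to $\Ext^1(\Omega,\pi)_{sm}=0$). The paper instead runs the same computation with $-x_n$ (again using the dual short exact sequence) to obtain $\Ext^i(\Omega,\pi)_{sm}\simeq\Ext^{i+1}(\Omega,\pi')_{sm}$, whence the ``Ping-Pong'' periodicity $\Ext^i(\Omega,\pi)_{sm}\simeq\Ext^{i+2}(\Omega,\pi)_{sm}$ for $i>0$; combined with vanishing in sufficiently high degree this kills all positive-degree Ext groups for both $\pi$ and $\pi'$, and only \emph{then} is the irreducibility argument run.
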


\vskip 5pt

\begin{Rmk}
In the setting of Theorem \ref{ThmBigThetaEnhanced}, we have assumed that $m\geq 2n-2$. Note that the dimensions of $V_0$ and $W_0$ also satisfy this inequality. Hence it makes sense to talk about Theorem \ref{ThmBigThetaEnhanced} for $\pi_0$.
\end{Rmk}

\vskip 5pt

\begin{proof}[Proof of Proposition \ref{P:ThetaEnhancedInd}]
Let $P$ be the standard parabolic subgroup of $\U(V)$ with Levi component $\GL_1\times\U(V_0)$. For $i\geq 0$, by the (derived version of) Frobenius reciprocity, we have
\begin{align*}
\Ext^i_{\U(V)}(\Omega,\mu\nu^{x_n}\rtimes\pi_0) = \Ext^i_{\GL_1\times \U(V_0)}(R_P\Omega,\mu\nu^{x_n}\boxtimes\pi_0),
\end{align*}
where $R_P$ is the normalized Jacquet module along $P$. To compute the RHS of above, one can appeal to the Kudla's filtration. There is a two-step filtration on $R_P\Omega$: 
\[
    R_P\Omega=R^0 \supset R^1 \supset R^2=0,
\]
whose successive quotient $J^a=R^a/R^{a+1}$ can be described as follows:
\[
    J^0=\chi_W\nu^{\frac{m-n+1}{2}}\boxtimes \Omega_0,
\]
and 
\[
    J^1=\Ind_{\GL_1\times\U(V_0)\times Q}^{\GL_1\times\U(V_0)\times \U(W)}\left(\calS(E^\times)\boxtimes \Omega_{00}\right).
\]
Here: 
\begin{itemize}
\item $\Omega_0$ is the Weil representation associated to $\U(V_0)\times\U(W)$; 
\vskip 5pt

\item $Q$ is a maximal parabolic subgroup of $\U(W)$ stabilizing an isotropic line of $W$; the Levi subgroup of $Q$ is isomorphic to $\GL_1\times\U(W_0)$; 
\vskip 5pt

\item $\calS(E^\times)$ is the space of Schwartz functions on $E^\times$, equipped with the natural action of two copies of $\GL_1$ (twisted by the splitting characters);
\vskip 5pt

\item $\Omega_{00}$ is the Weil representation associated to $\U(V_0)\times\U(W_0)$;
\vskip 5pt
\end{itemize}
Applying the functor $\Hom_{\GL_1\times\U(V_0)}(\cdot,\mu\nu^{x_n}\boxtimes\pi_0)$ to the short exact sequence
\[
    0\longrightarrow J^1 \longrightarrow R_P\Omega \longrightarrow J^0  \longrightarrow 0,
\]
we get a long exact sequence
\begin{align*}
    \cdots\longrightarrow \Ext^i(J^0,\mu\nu^{x_n}\boxtimes\pi_0)_{sm} \longrightarrow \Ext^i(R_P\Omega &,\mu\nu^{x_n}\boxtimes\pi_0)_{sm}\\ 
    \longrightarrow \Ext^i(J^1 &,\mu\nu^{x_n}\boxtimes\pi_0)_{sm} \longrightarrow \Ext^{i+1}(J^0,\mu\nu^{x_n}\boxtimes\pi_0)_{sm} \longrightarrow\cdots 
\end{align*}
Here for the exactness of taking $\U(W)$-smooth vectors, one may refer to \cite[Lem. 5.14, Lem. 7.4]{MR3753906}. Since by our assumptions ${x_n} \ne \frac{m-n+1}{2}$, we know that 
\[
    \Ext^i(J^0,\mu\nu^{x_n}\boxtimes\pi_0)=0
\]
for all $i\geq 0$. This implies that
\begin{align*}
\Ext^i(R_P\Omega,\mu\nu^{x_n}\boxtimes\pi_0)_{sm} \simeq & \Ext^i(J^1,\mu\nu^{x_n}\boxtimes\pi_0)_{sm}\\
    =&\left(\mu'\right)^c\nu^{x_n}\rtimes \Ext^i(\Omega_{00},\pi_0)_{sm},
\end{align*}
where $\mu'=\mu\chi_W^{-1}\chi_V$. Here in the last equality, we have made use of (an $\Ext$-version of) \cite[Lem. 2.6]{MR3753906} and the K\"unneth formula \cite[Lem. 3.3]{MR3753906}. In particular, we get 
\[
    \Hom_{\U(V)}(\Omega,\mu\nu^{x_n}\rtimes\pi_0)_{sm}\simeq\left(\mu'\right)^c\nu^{x_n}\rtimes \Theta(\pi_0)^\vee
\]
and 
\begin{equation*}\label{Acyclic-X}\tag{$\spadesuit$}
    \Ext^i_{\U(V)}(\Omega,\mu\nu^{x_n}\rtimes\pi_0)_{sm} = 0 \quad \textit{for $i>0$}
\end{equation*}
by our induction hypothesis. Similarly, we also have $-{x_n} \ne \frac{m-n+1}{2}$. The same argument gives that
\begin{equation*}\label{Acyclic-Y}\tag{$\clubsuit$}
    \Ext^i_{\U(V)}(\Omega,\mu\nu^{-x_n}\rtimes\pi_0)_{sm} = 0 \quad \textit{for $i>0$}.
\end{equation*}

\vskip 5pt

Now note that Lemma \ref{Length2} asserts that $\mu\nu^{x_n}\rtimes\pi_0$ is of length $2$. Let $\pi'$ be the unique irreducible quotient of $\mu\nu^{x_n}\rtimes\pi_0$, the sequence
\[
    0\longrightarrow \pi \longrightarrow \mu\nu^{x_n}\rtimes\pi_0 \longrightarrow \pi'  \longrightarrow 0
\]
is exact. Applying the functor $\Hom_{\U(V)}(\Omega,\cdot)$ to this short exact sequence and taking $\U(W)$-smooth vectors, we get
\begin{align*}\tag{$\heartsuit$}\label{L.E.S.Heart}
0\rightarrow \Hom(\Omega,\pi)_{sm}\rightarrow \Hom(\Omega,\mu\nu^{x_n}\rtimes\pi_0)_{sm}&\rightarrow \Hom(\Omega,\pi')_{sm}\rightarrow \Ext^1(\Omega,\pi)_{sm} \rightarrow  \\
\cdots\rightarrow\Ext^i(\Omega,\mu\nu^{x_n}\rtimes\pi_0)_{sm}\rightarrow\Ext^i(\Omega,\pi')_{sm}&\rightarrow\Ext^{i+1}(\Omega,\pi)_{sm}\rightarrow\Ext^{i+1}(\Omega,\mu\nu^{x_n}\rtimes\pi_0)_{sm}\rightarrow\cdots
\end{align*}
It follows from (\ref{Acyclic-X}) that
\[
    \Ext^i(\Omega,\pi')_{sm}\simeq\Ext^{i+1}(\Omega,\pi)_{sm} \quad \textit{for $i>0$}.
\]
Dually, apply both the contragredient and MVW-involution to $\mu\nu^{x_n}\rtimes\pi_0$, we get a dualized short exact sequence
\[
    0\longrightarrow \pi' \longrightarrow \mu\nu^{-x_n}\rtimes\pi_0 \longrightarrow \pi \longrightarrow 0.
\]
Similar to the argument above, this short exact sequence leads to a long exact sequence 
\begin{align*}\tag{$\diamondsuit$}\label{L.E.S.Diamond}
0\rightarrow \Hom(\Omega,\pi')_{sm}\rightarrow \Hom(\Omega,\mu\nu^{-x_n}\rtimes\pi_0)_{sm}&\rightarrow \Hom(\Omega,\pi)_{sm}\rightarrow \Ext^1(\Omega,\pi')_{sm} \rightarrow  \\
\cdots\rightarrow\Ext^i(\Omega,\mu\nu^{-x_n}\rtimes\pi_0)_{sm}\rightarrow\Ext^i(\Omega,\pi)_{sm}&\rightarrow\Ext^{i+1}(\Omega,\pi')_{sm}\rightarrow\Ext^{i+1}(\Omega,\mu\nu^{-x_n}\rtimes\pi_0)_{sm}\rightarrow\cdots
\end{align*}
which combining with (\ref{Acyclic-Y}) similarly implies that
\[
    \Ext^i(\Omega,\pi)_{sm}\simeq\Ext^{i+1}(\Omega,\pi')_{sm} \quad \textit{for $i>0$}.
\]
Playing ``Ping-Pong'', one can see that $\Ext^i(\Omega,\pi)$ is periodic:
\[
    \Ext^i(\Omega,\pi)_{sm} \simeq \Ext^{i+2}(\Omega,\pi)_{sm} \quad \textit{for $i>0$}.
\]
Since the higher extensions vanish when the degree is sufficiently large \cite[Pg. 98, Sect. 4.2]{BNote}, these groups $\Ext^i(\Omega,\pi)_{sm}$ must vanish for all $i>0$ with no other choice. The same reason also gives the vanishing of higher extensions of $\pi'$.\\

Suppose that $\Theta(\pi)\ne 0$. It remains to show that $\Theta(\pi)$ is irreducible. Thanks to the vanishing of higher extensions, we deduce from the long exact sequence (\ref{L.E.S.Heart}) that 
\[
    0\longrightarrow \Theta(\pi)^\vee \longrightarrow \left(\mu'\right)^c\nu^{x_n}\rtimes \Theta(\pi_0)^\vee \longrightarrow \Theta(\pi')^\vee  \longrightarrow 0
\]
is exact. In particular, $\Theta(\pi_0)$ must be non-zero, hence irreducible by our induction hypothesis. It then follows from Theorem \ref{T:AC-P} that $\Theta(\pi_0)$ lies in $\Pi_{\theta(\psi_0)}(\U(W_0))$, where
\[
    \theta(\psi_0)=\delta\chi_W^{-1}\chi_V+\mu\chi_W^{-1}\chi_V\boxtimes S_{n-3}+\chi_V\boxtimes S_{m-n}.
\]
Now we claim that the induced representation $\left(\mu'\right)^c\nu^{x_n}\rtimes \Theta(\pi_0)^\vee$ is of length $2$, and the two subquotients are non-isomorphic. Indeed, if $m-n=1$ and $\delta=\chi_W$, one can easily check this by hand. Otherwise note that:
\begin{itemize}
    \item $\theta(\psi_0)$ is elementary and trivial on the Weil-Deligne $\SL_2$;
    \vskip 5pt

    \item $\mu'\boxtimes S_{-2x_n-1}\subset\theta(\psi_0)$ but $\mu'\boxtimes S_{-2x_n+1}\not\subset\theta(\psi_0)$.
\end{itemize}
\vskip 5pt
In short, we are again in a situation such that we can appeal to Lemma \ref{Length2}, from which our claim follows. Therefore it suffices to check that $\Theta(\pi') \ne 0$. We shall argue by contradiction to show this. Suppose on the contrary that $\Theta(\pi') = 0$. Then on the one hand we have
\[
    \Theta(\pi)^\vee \simeq \left(\mu'\right)^c\nu^{x_n}\rtimes \Theta(\pi_0)^\vee,
\]
which implies that $\left(\mu'\right)^c\nu^{x_n}\rtimes \Theta(\pi_0)^\vee$ has socle $\theta(\pi)^\vee$. On the other hand, we also deduce from the long exact sequence (\ref{L.E.S.Diamond}) that 
\[
    0\longrightarrow \Theta(\pi')^\vee \longrightarrow \left(\mu'\right)^c\nu^{-x_n}\rtimes \Theta(\pi_0)^\vee \longrightarrow \Theta(\pi)^\vee  \longrightarrow 0
\]
is exact. Since we had assumed that $\Theta(\pi') = 0$, this exact sequence implies that
\[
    \left(\mu'\right)^c\nu^{-x_n}\rtimes \Theta(\pi_0)^\vee \simeq \Theta(\pi)^\vee.
\]
Applying both the contragredient and the MVW-involution, we get
\[
    \Theta(\pi)^{MVW} \simeq \left(\mu'\right)^c\nu^{x_n}\rtimes \Theta(\pi_0)^\vee,
\]
which implies that $\left(\mu'\right)^c\nu^{x_n}\rtimes \Theta(\pi_0)^\vee$ also has cosocle $\theta(\pi)^\vee$. This contradicts to our claim. Thus $\Theta(\pi') \ne 0$ as desired.

\end{proof}

\vskip 5pt

\begin{Cor}
Theorem \ref{ThmBigThetaEnhanced} holds.
\end{Cor}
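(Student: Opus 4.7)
The plan is to deduce Theorem \ref{ThmBigThetaEnhanced} from Proposition \ref{P:ThetaEnhancedInd} by induction on $n = \dim V$, the induction descending in steps of $2$.

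For the inductive step, assume $n \geq 3$. The factor $\mu \boxtimes S_{n-1}$ acts non-trivially through the Arthur $\SL_2$, so $\psi = \delta + \mu \boxtimes S_{n-1}$ is non-tempered; consequently every $\pi \in \Pi_\psi(\U(V))$ is non-supercuspidal (it appears as a subquotient of a proper parabolic induction in the M\"oglin construction of non-tempered A-packets). Hence Lemma \ref{L:PJM} furnishes a unique $\pi_0 \in \Pi_{\psi_0}(\U(V_0))$ with $\psi_0 = \delta + \mu \boxtimes S_{n-3}$, $\dim V_0 = n - 2$, together with an embedding $\pi \hookrightarrow \mu\nu^{x_n} \rtimes \pi_0$. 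The hypothesis $m \geq \max\{2n-2, n\}$ of Theorem \ref{ThmBigThetaEnhanced} forces $m \geq 2(n-2) - 2$, so the induction hypothesis applies to $\pi_0$, and Proposition \ref{P:ThetaEnhancedInd} then transports the conclusion to $\pi$.

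What remain are the base cases $n = 1$ and $n = 2$. For $n = 1$, I would interpret $\mu \boxtimes S_0$ as vanishing, so $\psi = \delta$ and $\pi$ is simply a character of $\U(V) \simeq E^1$; the irreducibility of the big theta lift of a character in the range $m$ even, $m \geq 0$, is a classical fact, and the vanishing of the higher smooth $\Ext$ groups reduces to a direct Kudla-filtration computation on the $\U(W)$-side, exploiting the triviality of the Jacquet functor for the compact group $E^1$. For $n = 2$, $\psi = \delta + \mu$ is tempered and $\pi$ lies in the associated L-packet; irreducibility of $\Theta(\pi)$ in our range is known for theta lifts of tempered representations, and the $\Ext$-vanishing can be obtained by the same Jacquet-module and second-adjointness machinery used inside the proof of Proposition \ref{P:ThetaEnhancedInd}, now applied without any descent step.

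The hard part, in my view, is the base case $n = 2$: Proposition \ref{P:ThetaEnhancedInd} cannot be iterated further here, and one must establish both the irreducibility of the big theta lift and the vanishing of $\Ext^i(\Omega,\pi)_{sm}$ by hand using explicit low-rank theta correspondence results, with particular attention to the exceptional low-rank situation flagged in the remark following Theorem \ref{ThmBigThetaEnhanced} (namely $n = 3$, $m = 4$, $\delta = \chi_W$). Once these base cases are in place, the induction runs mechanically.
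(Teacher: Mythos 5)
Your induction skeleton (descend via Proposition \ref{P:ThetaEnhancedInd}) matches the paper's, but the way you terminate the descent is wrong, and the error propagates into your choice of base cases.

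You assert that because $\psi = \delta + \mu \boxtimes S_{n-1}$ is non-tempered for $n \geq 3$, every $\pi \in \Pi_\psi(\U(V))$ is automatically non-supercuspidal. This is not true in general: non-tempered A-packets in M{\oe}glin's construction can and do contain supercuspidal members. Indeed, the statement of Lemma \ref{L:PJM} is phrased precisely as ``if $\pi$ is \emph{not} supercuspidal, then there exists $\pi_0 \ldots$'', which only makes sense because supercuspidal $\pi$ with $n \geq 3$ do occur (e.g.\ for $n = 3$, where the Aubert dual parameter $\delta + \mu S_2$ is discrete without gaps). So your descent can grind to a halt not only at small $n$ but at any $n$ where $\pi$ happens to be supercuspidal, and your argument provides no way out in that situation.

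The paper's proof handles this correctly and more simply. It takes as base cases the supercuspidal $\pi$ (in any dimension) together with $n = 0$. For supercuspidal $\pi$, irreducibility of $\Theta(\pi)$ is a standard fact in the theory of the theta correspondence, and the vanishing of $\Ext^i_{\U(V)}(\Omega, \pi)_{sm}$ for $i > 0$ is immediate because supercuspidal representations of $\U(V)$ are compact (projective objects in the relevant category), so no cohomological machinery is needed at all. The $n = 0$ case is trivial. Your proposed base cases $n = 1, 2$ are, by contrast, genuinely harder: you would have to establish both irreducibility and $\Ext$-vanishing for arbitrary tempered $\pi$ on $\U(2)$, which you acknowledge as ``the hard part'' but do not actually carry out --- and this is exactly what the paper avoids by stopping at supercuspidals. (Note also that when $n = 1$, $\U(V) \simeq E^1$ is compact and every representation is supercuspidal, so this case is already subsumed under the supercuspidal base case; it is not an independent base case as in your outline.)

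To summarise: replace ``non-tempered implies non-supercuspidal'' with the correct dichotomy built into Lemma \ref{L:PJM} --- either $\pi$ is supercuspidal (a base case handled by compactness), or the descent applies --- and drop the $n = 2$ ad hoc argument in favour of the supercuspidal base case.
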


\vskip 5pt

\begin{proof}
By using the previous lemma, we can use reduce Theorem \ref{ThmBigThetaEnhanced} to the case that $\pi$ is supercuspidal, or to the case that $n=0$. In the supercuspidal case:
\begin{itemize}
    \item it is well-known that the big theta lift of a supercuspidal representation is irreducible;
    \vskip 5pt

    \item all higher extensions vanish since supercuspidal representations of a unitary group are compact.
\end{itemize}
\vskip 5pt
In the case that $n=0$, $\U(V)$ is trivial and the Weil representation is simple a character of $\U(W)$. Hence Theorem \ref{ThmBigThetaEnhanced} holds.

\end{proof}

\vskip 10pt

\section{Proof of the main result}  \label{S:Proof}

In this section, we shall prove the main result: Theorem \ref{T:Main}. We first note:
\begin{Lem}\label{L:CT}
Assume that Conjecture \ref{C-TGGP} holds for a tempered L-parameter $M$. Then for any conjugate orthogonal character $\calX$ of $L^\times$, Conjecture \ref{C-TGGP} also holds for the L-parameter $M\cdot\calX$.
\end{Lem}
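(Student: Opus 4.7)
The plan is to reduce the conjecture for $(M \cdot \calX, \mu)$ to the (assumed) conjecture for $(M, \mu \cdot \calX|_{E^\times}^{-1})$ by a character-twisting argument. Since $\calX$ is conjugate orthogonal with respect to $\sigma \in \Gal(L/K)$ (in particular trivial on $K^\times$), the twist $M \cdot \calX$ is again a tempered L-parameter of $\U(V_K)$ with the same combinatorial decomposition, and there is a canonical identification $A_M \simeq A_{M\calX}$ sending $a_i$ (attached to $M_i$) to $a_i'$ (attached to $M_i \calX$). Via the LLC for $\U(V_K)$ with the Whittaker datum attached to $\psi_K$, the map $\pi \mapsto \pi \otimes (\calX \circ \det_{V_K})$ realizes the bijection $\Pi_M \to \Pi_{M\calX}$ compatibly with this identification.

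To relate the multiplicities, note that the embedding $\U(V) \hookrightarrow \U(V_K)$ satisfies $\det_{V_K}|_{\U(V)} = \det_V$, with image in $E^1 \subset L^1$, so
\[
(\calX \circ \det_{V_K})\big|_{\U(V)} = \calX|_{E^\times} \circ \det_V,
\]
and $\calX|_{E^\times}$ is a conjugate orthogonal character of $E^\times$ (conjugate self-dual with respect to $\sigma|_E$, and trivial on $F^\times$ since $F \subset K$). Since the splitting of the metaplectic cover over $\U(V)$ depends linearly on the splitting character, one has $\omega_{V, \mu\chi} = \omega_{V, \mu} \otimes (\chi \circ \det_V)$ for any such $\chi$; hence
\[
m_V\bigl(\pi \otimes (\calX \circ \det_{V_K}), \mu\bigr) = m_V\bigl(\pi, \mu \cdot \calX|_{E^\times}^{-1}\bigr).
\]

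On the $\epsilon$-factor side, the crucial identity is the Asai twist formula $\As(M \calX) = \As(M) \otimes \calX|_{E^\times}$, which I would check summand-wise: for $1$-dimensional $M_i$, $\As(M_i \calX) = M_i \calX|_{E^\times}$, while for cross terms $\Ind_L^E(M_i \otimes M_j^\tau)$ the projection formula applies thanks to the identity $\calX \cdot \calX^\tau = \calX|_{E^\times} \circ \Nm_{L/E}$ (which follows from $\calX(x\tau(x)) = \calX(x)\calX^\tau(x)$). The same identity yields
\[
\Ind_L^E\bigl({}^\tau(M_i \calX) \otimes (M\calX/M_i \calX)\bigr) = \Ind_L^E\bigl({}^\tau M_i \otimes (M/M_i)\bigr) \otimes \calX|_{E^\times}.
\]
Substituting into Conjecture \ref{C-TGGP}(3)-(4) for $(M \calX, \mu)$, the predicted distinguished space $V_0$ and the distinguished character of $A_{M\calX}$ coincide with those predicted for $(M, \mu \calX|_{E^\times}^{-1})$ under the canonical identification $A_M \simeq A_{M\calX}$.

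Combining these three compatibilities, the conjecture for $(M \calX, \mu)$ becomes a formal reformulation of the conjecture for $(M, \mu \calX|_{E^\times}^{-1})$, which holds by hypothesis (applied with the conjugate symplectic character $\mu \calX|_{E^\times}^{-1}$ in place of $\mu$). The only point requiring care is the compatibility of the LLC bijection with the twist $\pi \mapsto \pi \otimes (\calX \circ \det_{V_K})$ at the level of component groups; this is a standard feature of the local classification for unitary groups, so no substantive obstacle is expected.
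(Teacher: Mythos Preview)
Your proposal is correct and follows the same approach as the paper: the key identity $m_V(\Pi\cdot\calX_0,\mu)=m_V(\Pi,\mu\cdot(\calX|_{E^\times})^{-1})$ reduces the conjecture for $M\cdot\calX$ to the assumed conjecture for $M$ (with the shifted conjugate-symplectic character). The paper records only this multiplicity identity in one line, whereas you additionally spell out the Asai twist formula and the LLC compatibility needed to match parts (3) and (4); these verifications are implicit in the paper but your account is accurate.
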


\vskip 5pt

\begin{proof}
To see this, one simply notes that
\[
    m_V(\Pi,\mu)=m_V\Big(\Pi\cdot{\calX_0},\mu\left(\calX~\big|_{E^\times}\right)\Big),
\]
where ${\calX_0}$ is the character of $L^1$ whose base change to $L^\times$ is $\calX$.

\end{proof}

Let $n \geq 2$ be an integer, and $V$ an $(n+1)$-dimensional skew-Hermitian space over $E$. We shall start with an L-parameter of the form
\[
    M=M_0 + M_1,
\] 
where $M_1$ is a conjugate self-dual character of parity $(-1)^n$.

\vskip 5pt

\subsection{Two seesaw diagrams: Uniqueness}

If there is an irreducible tempered representation $\Pi$ in the L-packet $\Pi_M$ corresponding to $\eta\in \Irr A_M$ such that
\[
    m_V(\Pi,\mu)\ne 0,
\]
we would like to lift $\Pi$ to some unitary group of $n$-variables to obtain some information. Let $\{a_i\}_{i=1}^r$ be a canonical basis of $A_M$, where each $a_i$ corresponds to some irreducible subrepresentation $M_i$ of $M$ (so $a_1$ corresponds to $M_1$). We set $\epsilon=\eta(a_1)$ and $W$ the unique $n$-dimensional Hermitian space over $L$ of sign $\epsilon$. Let $\left(\calX_V,\calX_W\right)$ be a pair of characters of $L^\times$, such that 
\[
    \calX_V~\big|_{K^\times}=\omega_{L/K}^{n+1} \quad \textit{and} \quad \calX_W=M_{1}.
\]
Then one can consider the theta correspondence between $\U(V_K)\times \U(W)$ with respect to the splitting character $\left(\calX_V,\calX_W\right)$. By \cite[Sect. 4.6(P2)]{MR3573972}, one knows that there is an irreducible tempered representation $\Sigma$ of $\U(W)$, such that
\[
    \Pi=\Theta(\Sigma)
\]
is the big theta lift of $\Sigma$. Indeed, one knows that $\Sigma$ has the L-parameter
\[
    \theta(M)=M_0\cdot\calX_W^{-1}\calX_V,
\]
and corresponds to the character $\theta(\eta) = \eta~\big|_{A_{\theta(M)}}$. Consider the following seesaw diagram:
\begin{equation*}\label{SeeSaw-1}\tag{$\natural.1$}
\begindc{\commdiag}[8]
\obj(-60,40)[aa]{$\U\left(\calR W\right)$}
\obj(-51,34)[a]{$~$}
\obj(-60,-40)[b]{$\U\left(W\right)$}
\obj(60,40)[c]{$\U\left(V_K\right)$}
\obj(60,-40)[d]{$\U(V)$}
\mor{aa}{b}{}[\atleft,\solidline]
\mor{a}{d}{}[\atleft,\solidline]
\mor{c}{d}{}[\atleft,\solidline]
\mor{c}{b}{}[\atleft,\solidline]
\enddc
\qquad \qquad
\begindc{\commdiag}[8]
\obj(-60,40)[aa]{$\Lambda=\Theta(\omega_{V,\mu}[\chi])$}
\obj(-51,34)[a]{$~$}
\obj(-60,-40)[bb]{$\Sigma$}
\obj(-54,-36)[b]{$~$}
\obj(60,40)[c]{$\Pi=\Theta(\Sigma)$}
\obj(51,34)[cc]{$~$}
\obj(60,-40)[d]{$\omega_{V,\mu}[\chi]$}
\mor{aa}{bb}{}[\atleft,\solidline]
\mor{a}{d}{}[\atleft,\solidline]
\mor{c}{d}{}[\atleft,\solidline]
\mor{cc}{b}{}[\atleft,\solidline]
\enddc
\end{equation*}
where:
\begin{itemize}
    \item $\calR W$ is the restriction of scalar of $W$ to $E$; \\

    \item the theta correspondence between $\U(V_K)\times\U(W)$ is with respect to some splitting characters $(\calX_V,\calX_W)$; \\

    \item the theta correspondence between $\U(V)\times\U\left(\calR W\right)$ is with respect to some splitting characters $(\chi_V,\chi_W)$; \\

    \item to make use of this seesaw diagram, we choose these splitting characters so that:
        \[
            \calX_V=\chi_V\circ\Nm_{L/E} \quad \textit{and} \quad \chi_W=\calX_W~\big|_{E^\times};
        \]
        \vskip 5pt

    \item $\chi$ is the L-parameter of the central character of the restriction of $\Pi$ to $\U(V)$, i.e.  
    \[
        \chi=\det(M)~\big|_{E^\times}.
    \]
\end{itemize}
\vskip 5pt
Then by the seesaw identity, we get
\begin{equation*}\label{E:SS1}\tag{$\maltese.1$}
    m_V\left(\Pi,\mu\right) = \dim \Hom_{\U(W)} \left(\Lambda, \Sigma\right).
\end{equation*}
In particular, $\Lambda$ is non-zero. By Lemma \ref{L:WR-L}, Theorem \ref{T:AC-P} and Theorem \ref{ThmBigThetaEnhanced}, we know that:
\begin{itemize}
    \item $\omega_{V,\mu}[\chi]$ lies in the A-packet $\Pi_{\Psi_{M,\mu}}(\U(V))$, where 
        \[
            \Psi_{M,\mu}=\chi\cdot\mu^{-n}+\mu\boxtimes S_n;
        \]

    \item $\Lambda$ is an irreducible unitary representation lies in the A-packet $\Pi_{\theta(\Psi_{M,\mu})}\left(\U\left(\calR W\right)\right)$, where
        \begin{align*}
            \theta\left(\Psi_{M,\mu}\right) &= \Psi_{M,\mu}\cdot\chi_W^{-1}\chi_V+\chi_V\boxtimes S_{n-1} \\
            &= \chi\cdot\mu^{-n}\cdot\chi_W^{-1}\chi_V + \chi_V\boxtimes S_{n-1} + \mu\cdot\chi_W^{-1}\chi_V\boxtimes S_n.
        \end{align*}
\end{itemize}

\vskip 10pt

To compute the RHS of equality (\ref{E:SS1}), we shall use another seesaw diagram:
\begin{equation*}\label{SeeSaw-2}\tag{$\natural.2$}
\begindc{\commdiag}[8]
\obj(60,40)[aa]{$\U\left(\calR W\right)$}
\obj(54,36)[a]{$~$}
\obj(60,-40)[b]{$\U\left(W\right)$}
\obj(-60,40)[c]{$\U\left(V^\flat_K\right)$}
\obj(-60,-40)[d]{$\U(V^\flat)$}
\mor{aa}{b}{}[\atleft,\solidline]
\mor{a}{d}{}[\atleft,\solidline]
\mor{c}{d}{}[\atleft,\solidline]
\mor{c}{b}{}[\atleft,\solidline]
\enddc
\qquad \qquad
\begindc{\commdiag}[8]
\obj(60,40)[a]{${\Theta(\omega)}$}
%\obj(51,34)[a]{$~$}
\obj(60,-40)[b]{$\Sigma$}
%\obj(51,-34)[b]{$~$}
\obj(-60,40)[cc]{$\Pi^\flat=\Theta(\Sigma)$}
\obj(-54,36)[c]{$~$}
\obj(-60,-40)[d]{$\omega$}
\mor{a}{b}{}[\atleft,\solidline]
\mor{a}{d}{}[\atleft,\solidline]
\mor{cc}{d}{}[\atleft,\solidline]
\mor{c}{b}{}[\atleft,\solidline]
\enddc
\end{equation*}
where:
\begin{itemize}
    \item $V^\flat$ is an $n$-dimensional skew-Hermitian space over $E$ which will be suitably chosen later, and $V^\flat_K$ is its scalar extension to $L$;\\

    \item the theta correspondence between $\U(V^\flat)\times\U\left(\calR W\right)$ is with respect to some splitting characters $(\chi_{V^\flat},\chi'_W)$; \\

    \item the theta correspondence between $\U(V^\flat_K)\times\U(W)$ is with respect to some splitting characters $(\calX_{V^\flat},\calX'_W)$; \\

    \item to make use of this seesaw diagram, we choose these splitting characters so that:
        \[
            \calX_{V^\flat}=\chi_{V^\flat}\circ\Nm_{L/E} \quad \textit{and} \quad \chi'_W=\calX'_W~\big|_{E^\times};
        \]

    \item $\omega$ is some irreducible unitary representation of $\U(V^\flat)$ which will also be suitably chosen later. 
\end{itemize}
\vskip 5pt
We would like to choose these data appropriately such that $\omega$ is an irreducible constituent of some Weil representation, and $\Lambda=\Theta(\omega)$. To make this possible, we need to pick up these splitting characters very carefully. Let 
\[
    \chi_{V^\flat}=\mu\cdot\chi_W^{-1}\chi_V \quad \textit{and} \quad \calX'_W= M_1^{-1} \cdot \Upsilon,
\]
where $\Upsilon$ is a conjugate orthogonal character of $L^\times$ so that
\[
    \Upsilon ~\Big|_{E^\times} = \mu^2.
\]
It is not hard to see that such $\Upsilon$ exists. Then again by Theorem \ref{T:AC-P} one can see that $\omega$ (if exists) lies in the A-packet $\Pi_{\Psi^\flat}(\U(V^\flat))$, where 
        \[
            \Psi^\flat= \chi^\flat\cdot\mu^{-n+1} + \mu\boxtimes S_{n-1}, \quad \textit{with } \chi^\flat=\det(M/M_1)~\big|_{E^\times}.
        \]
Indeed, we have:

\begin{Prop}\label{P:AdamsWR}
Let $V^\flat$ be the $n$-dimensional skew-Hermitian space of sign
\begin{equation}\label{Eq-SignFlat}\tag{$\dagger$}
    \epsilon\left(V^\flat\right)=\begin{cases}
    +1 \quad & \textit{if $n=2$ and $\chi^\flat=\mu^2$},\\
    \\
    \epsilon(V)\cdot\epsilon\left(\calR W\right)\cdot\epsilon\left(\frac{1}{2},\As^+_{L/E}(M_{1})\cdot\mu^{-1}, \psi_{E,\delta}\right)\cdot\omega_{E/F}(-1)^n \quad & \textit{otherwise},
    \end{cases}
\end{equation}
and
\[
    \omega=\omega_{V^\flat,\mu}[\chi^\flat].
\]
Then $\Lambda$ is the (big) theta lift of $\omega$ to $\U(\calR W)$, i.e. $\Lambda=\Theta(\omega)$.
\end{Prop}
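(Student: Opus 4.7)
The plan is to identify $\Lambda$ and $\Theta(\omega)$ by verifying that both are irreducible members of the \emph{same} local A-packet of $\U(\calR W)$ and correspond to the \emph{same} character of the associated component group. The three tools are the refined Adams conjecture (Theorems \ref{T:AC-P} and \ref{T:AC-L}), the irreducibility result Theorem \ref{ThmBigThetaEnhanced}, and the explicit description of $\omega_{V^\flat,\mu}[\chi^\flat]$ provided by Lemma \ref{L:WR-L}.

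First I would apply Lemma \ref{L:WR-L} to $\omega = \omega_{V^\flat,\mu}[\chi^\flat]$. The case split in \eqref{Eq-SignFlat} on $\epsilon(V^\flat)$ is arranged so that in the exceptional situation $n=2$, $\chi^\flat=\mu^2$ we are in part (1) of the lemma (which forces $\epsilon(V^\flat)=+1$), while in every other case we are in part (2); in both cases $\omega$ is a non-zero irreducible unitary representation in the A-packet $\Pi_{\Psi^\flat}(\U(V^\flat))$, with an explicit character of $A_{\Psi^\flat}$. Then I would apply Theorem \ref{T:AC-P} with splitting data $(\chi_{V^\flat},\chi'_W)$ to produce the A-parameter of $\Theta(\omega)$, namely
\[
    \theta(\Psi^\flat) \;=\; \Psi^\flat\cdot(\chi'_W)^{-1}\chi_{V^\flat} + \chi_{V^\flat}\boxtimes S_n.
\]
A direct bookkeeping using $\chi_{V^\flat}=\mu\chi_W^{-1}\chi_V$, $\chi_W=M_1|_{E^\times}$, $\chi'_W|_{E^\times}=\mu^{2}\cdot M_1^{-1}|_{E^\times}$ and $\chi=\chi^\flat\cdot M_1|_{E^\times}$ shows that $\theta(\Psi^\flat)$ coincides summand by summand with the A-parameter $\theta(\Psi_{M,\mu})$ of $\Lambda$ already recorded above.

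Non-vanishing of $\Theta(\omega)$ follows from Theorem \ref{T:AC-P}(2) except in the low-rank corners flagged in the remark after Theorem \ref{ThmBigThetaEnhanced}, which can be dispatched by hand. Irreducibility of $\Theta(\omega)$ follows from Theorem \ref{ThmBigThetaEnhanced}, whose hypothesis is met since $\Psi^\flat$ is exactly of the form $\delta + \mu\boxtimes S_{n-1}$ considered there. Thus $\Theta(\omega)$ is an irreducible member of $\Pi_{\theta(\Psi^\flat)}(\U(\calR W))$, and so is $\Lambda$.

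The main step is then to match the two characters of $A_{\theta(\Psi^\flat)}$ attached respectively to $\Theta(\omega)$ and to $\Lambda$. Theorem \ref{T:AC-L} writes each of these characters as the character on the source (given by Lemma \ref{L:WR-L} applied to $V^\flat$, resp.\ $V$) twisted, basis element by basis element of $\theta(\Psi^\flat)$, by an explicit local root number. Substituting the two formulas from Lemma \ref{L:WR-L} and cancelling the common contributions, the discrepancy collapses to a single expression involving $\epsilon(V)$, $\epsilon(V^\flat)$, $\epsilon(\calR W)$, a root number attached to the character $M_1$, and the parity sign $\omega_{E/F}(-1)^n$. Forcing this discrepancy to be trivial on every basis element of the component group is then precisely the sign condition \eqref{Eq-SignFlat}. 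I expect the main obstacle to be exactly this epsilon-factor bookkeeping: namely, rewriting the root number attached to $M_1$ as the Asai-type factor $\epsilon(\tfrac{1}{2},\As^+_{L/E}(M_1)\mu^{-1},\psi_{E,\delta})$ appearing in \eqref{Eq-SignFlat}. This step uses the standard identity relating conjugate-orthogonal Asai factors over $L/E$ to root numbers of characters of $L^\times$, together with the $(-1)^n$-parity of $M_1$ and the normalisation of the central character forced by $\chi=\chi^\flat\cdot M_1|_{E^\times}$.
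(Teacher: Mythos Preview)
Your proposal is correct and follows essentially the same strategy as the paper: place both $\Lambda$ and $\Theta(\omega)$ in the common A-packet $\Pi_{\theta(\Psi_{M,\mu})}(\U(\calR W))$ via Theorem~\ref{T:AC-P}, and then match their component-group characters using Lemma~\ref{L:WR-L} and Theorem~\ref{T:AC-L}, with the sign condition \eqref{Eq-SignFlat} on $\epsilon(V^\flat)$ emerging from the comparison. The paper organises the character computation as three explicit cases (the exceptional case $n=2,\ \chi^\flat=\mu^2$ via the induction principle, then odd $n$ and even $n$ separately), whereas you outline a uniform argument; but the content is the same, and you have correctly identified the Asai epsilon-factor identity as the one nontrivial bookkeeping step. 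One small remark: for the lift $\U(V^\flat)\to\U(\calR W)$ the dimensions are $n$ and $2n$, so the low-rank exceptions in the remark after Theorem~\ref{ThmBigThetaEnhanced} do not actually occur here, and Theorem~\ref{T:AC-P}(2) gives non-vanishing directly for all $n\geq 2$.
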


\vskip 5pt

\begin{proof}
We first check the special case that $n=2$ and $\chi^\flat=\mu^2$. So
\[
    \theta\left(\Psi_{M,\mu}\right)=\chi_V+\mu\cdot\chi_W^{-1}\chi_V\boxtimes S_2+\left(\chi_V^c\right)^\vee
\]
and 
\[
    \Lambda\subset \chi_V \rtimes (\chi_0\circ\det),
\]
where $\chi_0$ is the character of $E^1$ whose base change to $E^\times$ is $\mu\cdot\chi_W^{-1}\chi_V$. By the induction principle, one knows that the theta correspondence between $\U(V^\flat)\times\U(\calR W)$ defines a bijection 
\[
    \theta:\Pi_\phi(\U(V^\flat))\longrightarrow\Pi_{\theta\left(\Psi_{M,\mu}\right)}(\U(\calR W)),
\]
where $\phi=\mu+\mu$ is an L-parameter of $\U(V^\flat)$. Hence $\Lambda$ is the (big) theta lift of some $\omega\in \Pi_\phi(\U(V^\flat))$. To check that $\omega=\omega_{V^\flat,\mu}[\chi^\flat]$, one can compute the character $\eta^\flat\in\Irr A_\phi$ associated to $\omega$. Recall that $\Lambda$ is also the theta lift of $\omega_{V,\mu}[\chi]$. If we denote by $\eta\in \Irr A_{\Psi_{M,\mu}}$ and $\theta(\eta)\in\Irr A_{\theta(\Psi_{M,\mu})}$ the character associated to $\omega_{V,\mu}[\chi]$ and $\Lambda$ respectively, then by Lemma \ref{L:WR-L} and Theorem \ref{T:AC-L}, we have
\begin{align*}
    \theta(\eta)(a)&=\eta(a)=\epsilon\left(\frac{1}{2},\chi_W\mu^{-1},\psi_{E,\delta}\right).
\end{align*}
Here $a\in A_{\theta(\Psi_{M,\mu})}$ is the basis element corresponding to $\chi_V$, and we regard $A_\phi$ and $A_{\Psi_{M,\mu}}$ as subgroups of $A_{\theta(\Psi_{M,\mu})}$. Apply Theorem \ref{T:AC-L} again, we get
\begin{align*}
    \eta^\flat(a)&=\theta(\eta)(a)\cdot\epsilon\left(\frac{1}{2},\chi_V\cdot\chi_{V^\flat}^{-1},\psi_{E,\delta}\right)=1.
\end{align*}
This implies that $\omega=\omega_{V^\flat,\mu}[\chi^\flat]$.\\

Now excluding the special case above, we prove the general case. It would be convenient to consider the cases of odd and even $n$ separately. In the following, we check the case of odd $n$ in full details.\\

Let $e_1$, $e_{n-1}$ and $e_{n}$ be the basis elements of $A_{\theta\left(\Psi_{M,\mu}\right)}$ corresponding to $\chi\cdot\mu^{-n}\cdot\chi_W^{-1}\chi_V$, $\chi_V\boxtimes S_{n-1}$ and $\mu\cdot\chi_W^{-1}\chi_V\boxtimes S_n$ respectively. Then:

\begin{itemize}
    \item $A_{\Psi_{M,\mu}}$ can be regarded as the subgroup of $A_{\theta\left(\Psi_{M,\mu}\right)}$ generated by $e_1$ and $e_n$; \\

    \item $A_{\Psi^\flat}$ can be regarded as the subgroup of $A_{\theta\left(\Psi_{M,\mu}\right)}$ generated by $e_1$ and $e_{n-1}$.
\end{itemize}
\vskip 5pt
Recall that $\omega_{V,\mu}[\chi]\in\Pi_{\Psi_{M,\mu}}(\U(V))$ corresponds to the character $\nu_{n+1}$ of $A_{\Psi_{M,\mu}}$ such that
\[
    \nu_{n+1}: (e_1,e_n) \longmapsto \left(1,\epsilon(V)\right).
\]
Then by Theorem \ref{T:AC-L}, $\Lambda=\Theta\left(\omega_{V,\mu}[\chi]\right)$ corresponds to the character $\nu$ of $A_{\theta\left(\Psi_{M,\mu}\right)}$ such that
\[
    \nu: (e_1,e_n)\longmapsto \left(\epsilon\left(\frac{1}{2},\chi\cdot\mu^{-n}\cdot\chi_W^{-1},\psi_{E,\delta}\right),\, \epsilon(V)\cdot\epsilon\left(\frac{1}{2},\mu\cdot\chi_W^{-1}\boxtimes S_n, \psi_{E,\delta}\right)\right).
\]
The evaluation of $\nu$ at $e_{n-1}$ can be determined by its evaluation at $(e_1,e_n)$ and the sign of $\calR W$. To be more precise, $\nu$ takes $e_{n-1}$ to 
\begin{align*}
    &\epsilon(V)\cdot\epsilon\left(\calR W\right)\cdot\epsilon\left(\frac{1}{2},\chi\cdot\mu^{-n}\cdot\chi_W^{-1},\psi_{E,\delta}\right)\cdot\epsilon\left(\frac{1}{2},\mu\cdot\chi_W^{-1}\boxtimes S_n, \psi_{E,\delta}\right) \\
    =&\epsilon(V)\cdot\epsilon\left(\calR W\right)\cdot\epsilon\left(\frac{1}{2},\chi^\flat\cdot\mu^{-n},\psi_{E,\delta}\right)\cdot\epsilon\left(\frac{1}{2},\As^+_{L/E}(M_{1})\cdot\mu^{-1}, \psi_{E,\delta}\right)\cdot\omega_{E/F}(-1).
\end{align*}
Hence, if we let $V^\flat$ be the $n$-dimensional skew-Hermitian space as in (\ref{Eq-SignFlat}), then again by Theorem \ref{T:AC-L}, one can check that:
\begin{itemize}
    \item $\omega_{V^\flat,\mu}[\chi^\flat]\in\Pi_{\Psi^\flat}(\U(V^\flat))$ corresponding to the character $\nu_{n}$ of $A_{\Psi^\flat}$ such that
        \[
            \nu_{n}: (e_1, e_{n-1})\longmapsto \left(\epsilon\left(\frac{1}{2},\chi^\flat\cdot\mu^{-n},\psi_{E,\delta}\right),\, \epsilon\left(V^\flat\right)\cdot\epsilon\left(\frac{1}{2},\chi^\flat\cdot\mu^{-n},\psi_{E,\delta}\right)\right);
        \]

    \item the theta lift of $\omega_{V^\flat,\mu}[\chi^\flat]$ to $\U(\calR W)$ is non-zero and exactly equal to $\Lambda$.
\end{itemize}
These complete the proof of the case when $n$ odd.\\

Similarly, when $n$ is even, $\omega_{V,\mu}[\chi]\in\Pi_{\Psi_{M,\mu}}(\U(V))$ corresponds to 
\[
    \nu_{n+1}: (e_1,e_n) \longmapsto \left(\epsilon\left(\frac{1}{2},\chi\mu^{-n-1},\psi_{E,\delta}\right),\epsilon(V)\epsilon\left(\frac{1}{2},\chi\mu^{-n-1},\psi_{E,\delta}\right)\right).
\]
By Theorem \ref{T:AC-L}, $\Lambda$ corresponds to $\nu\in\Irr A_{\theta\left(\Psi_{M,\mu}\right)}$ such that $\nu~\big|_{A_{\Psi_{M,\mu}}}=\nu_{n+1}$, so
\[
    \nu(e_{n-1})= \epsilon(V)\cdot\epsilon\left(\calR W\right).
\]
Then again one can appeal to Theorem \ref{T:AC-L} to show that the theta lift of $\omega_{V^\flat,\mu}[\chi^\flat]$ is exactly $\Lambda$.

\end{proof}

\vskip 5pt

With this proposition in hand, we get
\begin{equation}\label{E:SS2}\tag{$\maltese.2$}
    m_V(\Pi,\mu) = \dim\Hom_{\U(W)}(\Lambda,\Sigma) = m_{V^\flat}(\Pi^\flat,\mu)
\end{equation}
is non-zero. In particular, $\Pi^\flat$ is non-zero. By \cite[Sect. 4.4(P1)]{MR3573972}, we know that:
\begin{itemize}
    \item The sign of the Hermitian space $W$ is given by
\[
    \epsilon(W)=\epsilon\left(\frac{1}{2}, M_0\cdot {^{\tau}M_{1}}\cdot\mu^{-1}\circ\Nm_{L/E}, \,\psi_{L,\delta}\right),
\]
where $\psi_{L,\delta}=\psi_F\left(\Tr_{L/F}(\delta\cdot~)\right)$.
\vskip 5pt

    \item $\Pi^\flat$ is an irreducible tempered representation has L-parameter $M^\flat=M'_0$ and corresponds to $\eta^\flat$, where
\[
    M'_0=M_0\cdot {^{\tau}M_{1}}\cdot M_{1}^{-1}\cdot\Upsilon\cdot\mu^{-1}\circ\Nm_{L/E},
\]
    and
        \[
            \eta^\flat(a_i)/\eta(a_i) = \epsilon\left(\frac{1}{2}, M_i\cdot{^{\tau}M_{1}}\cdot\mu^{-1}\circ\Nm_{L/E}, \,\psi_{L,\delta}\right)
        \]
    for all $i\geq 2$. 
\end{itemize}
Also note that
\[
    \epsilon\left(\calR W\right)=\epsilon(W)\cdot\omega_{K/F}\left(\delta^2\right)^n\cdot\omega_{E/F}(-1)^n.
\]
Substitute these into (\ref{Eq-SignFlat}), we get
\begin{equation*}\label{Eq-SignFlat2}\tag{$\dagger\dagger$}
    \epsilon\left(V^\flat\right)=\epsilon(V)\cdot\epsilon \left(\frac{1}{2}, \Ind_L^E\left(^{\tau}M_{1} \otimes (M/M_{1})\right)\cdot\mu^{-1}, \psi_{E,\delta}\right)\cdot\epsilon\left(\frac{1}{2},\As^+_{L/E}(M_{1})\cdot\mu^{-1}, \psi_{E,\delta}\right)\cdot\omega_{K/F}\left(\delta^2\right)^n.
\end{equation*}

\vskip 5pt

Now if we assume that Conjecture \ref{C-TGGP} holds for the L-parameter $M^\flat$, then it follows that:
\begin{enumerate}
    \item The multiplicity $m_{V}(\Pi,\mu)=1$. \\

    \item $V$ is the unique $(n+1)$-dimensional Hermitian space over $E$ predicted by the formula in Conjecture \ref{C-TGGP}(3). Indeed, note that for any semi-simple representation $N$ and any character $\calX$ of $WD_L$, we have 
\[
    \As^+(N\cdot\calX)=\As^+(N)\cdot\left(\calX~\Big|_{E^\times}\right).
\]
    Combining this with Conjecture \ref{C-TGGP}(3) for $M^\flat$, we know that
        \begin{align*}
            \epsilon\left(V^\flat\right)&=\epsilon\left(\frac{1}{2}, \As^+_{L/E}\left(M^\flat\right)\otimes\mu^{-1}, \psi_{E,\delta}\right)\cdot\omega_{K/F}\left(\delta^2\right)^{n(n-1)/2}\\
            &=\epsilon\left(\frac{1}{2}, \As^+_{L/E}\left(M_0\right)\otimes\mu^{-1}, \psi_{E,\delta}\right)\cdot\omega_{K/F}\left(\delta^2\right)^{n(n-1)/2}.
        \end{align*}
    Then applying the equality (\ref{Eq-SignFlat2}), we get
        \begin{align*}
            \epsilon(V)&=\epsilon\left(V^\flat\right)\cdot\epsilon \left(\frac{1}{2}, \Ind_L^E\left(^{\tau}M_{1} \otimes (M/M_{1})\right)\cdot\mu^{-1}, \psi_{E,\delta}\right)\cdot\epsilon\left(\frac{1}{2},\As^+_{L/E}(M_{1})\cdot\mu^{-1}, \psi_{E,\delta}\right)\cdot\omega_{K/F}\left(\delta^2\right)^n\\
            &=\epsilon\left(\frac{1}{2}, \As^+_{L/E}(M)\otimes\mu^{-1}, \psi_{E,\delta}\right)\cdot\omega_{K/F}\left(\delta^2\right)^{n(n+1)/2}.
        \end{align*}

    \item $\Pi$ is the unique member in $\Pi_{M}$ predicted by the formula in Conjecture \ref{C-TGGP}(4). Similar to (2), it follows from Conjecture \ref{C-TGGP}(4) that
    \begin{align*}
            \eta^\flat(a_i) &= \epsilon \left(\frac{1}{2}, \Ind_L^E\left(^{\tau}M_i \otimes (M_0/M_i)\right)\cdot\mu^{-1}, \psi_{E,\delta}\right)\\
            &= \epsilon\left(\frac{1}{2}, {^{\tau}M_i} \otimes (M_0/M_i)\cdot\mu^{-1}\circ\Nm_{L/E}, \psi_{L,\delta}\right)
        \end{align*}
        for all $i\geq 2$. Hence 
        \begin{align*}
            \eta(a_i)&=\eta^\flat(a_i)\cdot\epsilon\left(\frac{1}{2}, M_i\cdot{^{\tau}M_{1}}\cdot\mu^{-1}\circ\Nm_{L/E}, \,\psi_{L,\delta}\right)\\
            &=\epsilon\left(\frac{1}{2}, {^{\tau}M_i} \otimes (M/M_i)\cdot\mu^{-1}\circ\Nm_{L/E}, \,\psi_{L,\delta}\right)\\
            &=\epsilon \left(\frac{1}{2}, \Ind_L^E\left(^{\tau}M_i \otimes (M/M_i)\right)\cdot\mu^{-1}, \psi_{E,\delta}\right)
        \end{align*}
        for all $i\geq 2$. On the other hand, recall that $\eta(a_1)=\epsilon(W)$. This implies the desired equality
        \begin{align*}
            \eta(a_1)&=\epsilon\left(\frac{1}{2}, M_0\cdot {^{\tau}M_{1}}\cdot\mu^{-1}\circ\Nm_{L/E}, \,\psi_{L,\delta}\right) \\
            &=\epsilon \left(\frac{1}{2}, \Ind_L^E\left(^{\tau}M_1 \otimes (M/M_1)\right)\cdot\mu^{-1}, \psi_{E,\delta}\right).
        \end{align*}
\end{enumerate}
\vskip 5pt
The computation above shows that there is at most one $\Pi$ in the L-packet $\Pi_M$ such that $m_V(\Pi,\mu)\ne 0$.\\

\subsection{Reversing two seesaw diagrams: Existence}
Conversely, still under the assumption that Conjecture \ref{C-TGGP} holds for the L-parameter $M^\flat$, we can produce an irreducible tempered representation $\Pi'\in\Pi_M$ such that
\[
    m_{V'}(\Pi',\mu)\ne 0 
\]
for some $(n+1)$-dimensional skew-Hermitian space $V'$, from the unique irreducible tempered representation $\Pi^\flat\in\Pi_{M^\flat}$ such that
\[
    m_{V^\flat}(\Pi^\flat,\mu) \ne 0.
\] 
We do it by applying the two seesaw diagrams reversely as follows. First consider an analog of the seesaw diagram (\ref{SeeSaw-2}) (using the same splitting characters):
\begin{equation*}%\label{SeeSaw-2}\tag{$\natural.2$}
\begindc{\commdiag}[8]
\obj(60,40)[aa]{$\U\left(\calR W'\right)$}
\obj(54,36)[a]{$~$}
\obj(60,-40)[b]{$\U\left(W'\right)$}
\obj(-60,40)[c]{$\U\left(V^\flat_K\right)$}
\obj(-60,-40)[d]{$\U(V^\flat)$}
\mor{aa}{b}{}[\atleft,\solidline]
\mor{a}{d}{}[\atleft,\solidline]
\mor{c}{d}{}[\atleft,\solidline]
\mor{c}{b}{}[\atleft,\solidline]
\enddc
\qquad \qquad
\begindc{\commdiag}[8]
\obj(60,40)[a]{${\Lambda'=\Theta\left(\omega_{V^\flat,\mu}[\chi^\flat]\right)}$}
\obj(54,36)[aa]{$~$}
\obj(60,-40)[b]{$\Sigma'$}
\obj(54,-36)[bb]{$~$}
\obj(-60,40)[cc]{$\Pi^\flat=\Theta(\Sigma')$}
\obj(-54,36)[c]{$~$}
\obj(-60,-40)[d]{$\omega_{V^\flat,\mu}[\chi^\flat]$}
\obj(-54,-36)[dd]{$~$}
\mor{a}{b}{}[\atleft,\solidline]
\mor{aa}{dd}{}[\atleft,\solidline]
\mor{cc}{d}{}[\atleft,\solidline]
\mor{c}{bb}{}[\atleft,\solidline]
\enddc
\end{equation*} 
where $W'$ is the unique $n$-dimensional Hermitian space over $L$ chosen by the theta dichotomy \cite[Sect. 4.4(P1)]{MR3573972}; that is, the theta lift $\Sigma'$ of $\Pi^\flat$ to $\U(W')$ is non-zero. Symmetrically, $\Pi^\flat=\Theta\left(\Sigma'\right)$ is the big theta lift of $\Sigma'$. By the seesaw identity, we have
\begin{equation*}\label{E:SS-3}\tag{$\maltese.3$}
    m_{V^\flat}\left(\Pi^\flat,\mu\right)=\dim\Hom_{\U(W')}\left(\Lambda',\Sigma'\right).
\end{equation*}
In particular, $\Lambda'$ is non-zero. It then follows from Theorem \ref{T:AC-P} and Theorem \ref{T:AC-L} that $\Lambda'$ is an irreducible unitary representation lies in the A-packet $\Pi_{\theta(\Psi_{M,\mu})}\left(\U\left(\calR W'\right)\right)$, where
\begin{align*}
            \theta\left(\Psi_{M,\mu}\right) &= \chi\cdot\mu^{-n}\cdot\chi_W^{-1}\chi_V + \chi_V\boxtimes S_{n-1} + \mu\cdot\chi_W^{-1}\chi_V\boxtimes S_n.
\end{align*}

\vskip 10pt

Next we shall use an analog of the seesaw diagram (\ref{SeeSaw-1}). The following is an analog of the key Proposition \ref{P:AdamsWR}.

\begin{Prop}\label{P:AdamsWR-2}
Let $V'$ be the $(n+1)$-dimensional skew-Hermitian space of sign
\begin{equation}\label{Eq-SignPrime}\tag{$\ddag$}
    \epsilon\left(V^\prime\right)=
    \epsilon\left(V^\flat\right)\cdot\epsilon\left(\calR W'\right)\cdot\epsilon\left(\frac{1}{2},\As^+_{L/E}(M_{1})\cdot\mu^{-1}, \psi_{E,\delta}\right)\cdot\omega_{E/F}(-1)^n 
\end{equation}
and
\[
    \omega'=\omega_{V^\prime,\mu}[\chi].
\]
Then $\Lambda'$ is the (big) theta lift of $\omega'$ to $\U(\calR W')$, i.e. $\Lambda'=\Theta(\omega')$. Here we are using the same splitting characters as described in (\ref{SeeSaw-1}).
\end{Prop}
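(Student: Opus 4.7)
Proof proposal: The strategy mirrors Proposition \ref{P:AdamsWR} with the roles of the two starting spaces reversed. Instead of starting from $\omega_{V,\mu}[\chi]$ and solving for the sign of $V^\flat$ so that the character of the resulting theta lift matches on both sides, I start from $\omega_{V^\flat,\mu}[\chi^\flat]$ and solve for the sign of $V'$. Concretely, the plan is to compute the component-group character attached to $\Lambda'$ in two different ways and match them.

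First, by Lemma \ref{L:WR-L} I would record the A-parameter $\Psi^\flat$ and the associated character $\eta^\flat \in \Irr A_{\Psi^\flat}$ of $\omega_{V^\flat,\mu}[\chi^\flat]$ in terms of $\epsilon(V^\flat)$ and local root numbers (the formula branching on the parity of $n$, and in the special case $n=2$, $\chi^\flat=\mu^2$ one invokes Lemma \ref{L:WR-L}(1) in place of (2)). Applying Theorem \ref{T:AC-P} to the $\U(V^\flat)\times\U(\calR W')$ correspondence places $\Lambda'$ in $\Pi_{\theta(\Psi_{M,\mu})}(\U(\calR W'))$, and Theorem \ref{T:AC-L} then yields an explicit formula for the attached character $\nu' \in \Irr A_{\theta(\Psi_{M,\mu})}$ in terms of $\eta^\flat$ and additional local epsilon factors coming from the twist by the splitting characters.

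Second, I would run the same machine from the opposite end: Lemma \ref{L:WR-L}(2) gives $\omega_{V',\mu}[\chi] \in \Pi_{\Psi_{M,\mu}}(\U(V'))$ with character depending on $\epsilon(V')$, and Theorems \ref{T:AC-P}, \ref{T:AC-L} lift this to $\U(\calR W')$ producing a character $\widetilde\nu \in \Irr A_{\theta(\Psi_{M,\mu})}$. Forcing $\widetilde\nu = \nu'$ on the basis $(e_1,e_{n-1},e_n)$ of $A_{\theta(\Psi_{M,\mu})}$ reduces, after cancelling common epsilon factors and invoking the seesaw compatibilities $\chi_{V^\flat}=\mu\cdot\chi_W^{-1}\chi_V$ and $\calX'_W = M_1^{-1}\cdot\Upsilon$, to a single identity pinning down $\epsilon(V')$; unwinding it is designed to give exactly the formula (\ref{Eq-SignPrime}). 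Once the characters agree, $\Lambda'$ and $\Theta(\omega_{V',\mu}[\chi])$ belong to the same A-packet and correspond to the same element of $\Irr A_{\theta(\Psi_{M,\mu})}$, so they are isomorphic; Theorem \ref{ThmBigThetaEnhanced} (together with Theorem \ref{T:AC-P}(2) and the low-rank remark following Theorem \ref{ThmBigThetaEnhanced} to secure non-vanishing in the range $m\geq 2n-2$) guarantees that the big theta lift $\Theta(\omega_{V',\mu}[\chi])$ is itself irreducible, hence equal to $\Lambda'$.

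The main obstacle is not conceptual but bookkeeping: one must carefully track all epsilon-factor twists induced by the two sets of splitting characters simultaneously, treat the even- and odd-$n$ cases in parallel, and verify that the single formula (\ref{Eq-SignPrime}) uniformly absorbs the special case $n=2$, $\chi^\flat=\mu^2$ (in contrast to (\ref{Eq-SignFlat}), whose separate clause arose from the asymmetry between Lemma \ref{L:WR-L}(1) and (2)). Once the arithmetic of epsilon factors is arranged so that only the value $\nu'(e_{n-1})$ carries the relevant information about $\epsilon(V')$, the formula (\ref{Eq-SignPrime}) falls out and the proposition follows.
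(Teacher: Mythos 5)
Your overall strategy is the same as the paper's for the generic case: compute the component-group character of $\Lambda'$ from the $\U(V^\flat)$ side via Lemma~\ref{L:WR-L} and Theorem~\ref{T:AC-L}, compute the character of $\theta(\omega_{V',\mu}[\chi])$ from the $\U(V')$ side, match them, and then invoke Theorem~\ref{ThmBigThetaEnhanced} to upgrade $\theta(\omega')\simeq\Lambda'$ to $\Theta(\omega')=\Lambda'$. That part is sound.

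However, your treatment of the case $n=2$, $\chi^\flat=\mu^2$ has a genuine gap, and moreover you have misread the role of the remark following Theorem~\ref{ThmBigThetaEnhanced}. You write that this remark, together with Theorem~\ref{T:AC-P}(2), ``secures non-vanishing in the range $m\geq 2n-2$.'' The remark does the opposite: it \emph{lists the exceptions} where Theorem~\ref{T:AC-P}(2) does \emph{not} apply, and the second listed exception ($n'=3$, $m=4$, $\delta=\chi_W$) is precisely the case $n=2$, $\chi^\flat=\mu^2$ after translating notation ($\delta=\chi\mu^{-n}=\chi_W$ is equivalent to $\chi^\flat=\mu^2$). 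So in the one case that requires care, neither Theorem~\ref{T:AC-P}(2) nor the remark gives you non-vanishing of $\theta(\omega_{V',\mu}[\chi])$, and the character-matching argument alone only establishes the conditional ``if $\theta(\omega_{V',\mu}[\chi])\ne 0$, then it equals $\Lambda'$.'' You also describe the special case as merely requiring Lemma~\ref{L:WR-L}(1) in place of (2); that is not the issue. The real issue is that in this case $\theta(\Psi_{M,\mu})=\chi_V+\mu\chi_W^{-1}\chi_V\boxtimes S_2+(\chi_V^c)^\vee$ has a repeated constituent, so $A_{\theta(\Psi_{M,\mu})}$ has smaller rank and the generic bookkeeping of basis elements $(e_1,e_{n-1},e_n)$ breaks down. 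The paper's proof handles this by introducing the map $\theta_4$ on the disjoint union over companion spaces and showing (via Theorem~\ref{T:AC-P} and the induction principle) that it restricts to a bijection $\Pi_{\Psi_{M,\mu}}(\U(V'))\sqcup\Pi_{\Psi_{M,\mu}}(\U(V''))\to\Pi_{\theta(\Psi_{M,\mu})}(\U(\calR W'))\sqcup\Pi_{\theta(\Psi_{M,\mu})}(\U(\calR W''))$; this is what guarantees $\Lambda'$ really arises as a theta lift of \emph{some} $\omega'$ in the A-packet on the $V'$ or $V''$ side, after which the labelling computation pins it down. Your proposal is missing this bijection argument, and without it the special case is not handled.
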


\vskip 5pt

\begin{proof}
We first check the special case that $n=2$ and $\chi^\flat=\mu^2$. So
\[
    \Psi_{M,\mu}=\chi_W+\mu\boxtimes S_2
\]
and 
\[
    \theta\left(\Psi_{M,\mu}\right)=\chi_V+\mu\cdot\chi_W^{-1}\chi_V\boxtimes S_2+\left(\chi_V^c\right)^\vee.
\]
Let $V''$ and $\calR W''$ be the companion spaces of $V'$ and $\calR W'$ respectively. Consider the following map given by the theta correspondence:
\begin{equation*}
\theta_{4}:\Irr \U(V')\sqcup \Irr \U(V'')\longrightarrow \Irr \U(\calR W')\sqcup \Irr \U(\calR W''),
\end{equation*}
where:
\begin{equation*}
\pi\mapsto\begin{cases}
                         \theta_{\calR W'}(\pi)\in\Irr\left(\U(\calR W')\right) \quad &\textit{if } \theta_{\calR W'}(\pi)\ne 0;\\
                         \theta_{\calR W''}(\pi)\in\Irr\left(\U(\calR W'')\right) \quad &\textit{otherwise}.

                   \end{cases}
\end{equation*}
\vskip 5pt
\noindent This map is well-defined by the theta dichotomy. Using Theorem \ref{T:AC-P}, One can easily check by hand that this map restricts to a bijection
\[
    \theta_4: \Pi_{\Psi_{M,\mu}} (\U(V'))\sqcup \Pi_{\Psi_{M,\mu}} (\U(V''))\longrightarrow \Pi_{\theta\left(\Psi_{M,\mu}\right)} (\U(\calR W'))\sqcup \Pi_{\theta\left(\Psi_{M,\mu}\right)} (\U(\calR W'')).
\]
Hence $\Lambda'$ is the (big) theta lift of some
\[
    \omega'\in \Pi_{\Psi_{M,\mu}} (\U(V'))\sqcup \Pi_{\Psi_{M,\mu}} (\U(V'')).
\]
To check that $\omega'=\omega_{V',\mu}[\chi]$, one can use Theorem \ref{T:AC-L} and Lemma \ref{L:WR-L} to compute the character $\eta'\in\Irr A_{\Psi_{M,\mu}}$ associated to $\omega'$. We omit the details.\\

Excluding the special case above, the theta correspondence between $\U(V')\times \U(\calR W')$ is in the situation of Theorem \ref{T:AC-P}(2). It follows that the theta lift $\Theta(\omega_{V',\mu}[\chi])$ to $\U(\calR W')$ is non-vanishing. So the proof of the general case comes down to a computation of the labellings similar to the proof of Proposition \ref{P:AdamsWR}. We shall not repeat the tedious computation here. 

\end{proof}

\vskip 5pt

Now we can consider the following seesaw diagram, with respect to the same splitting characters as described in (\ref{SeeSaw-1}):
\begin{equation*}%\label{SeeSaw-1}\tag{$\natural.1$}
\begindc{\commdiag}[8]
\obj(-60,40)[aa]{$\U\left(\calR W'\right)$}
\obj(-51,34)[a]{$~$}
\obj(-60,-40)[b]{$\U\left(W'\right)$}
\obj(60,40)[c]{$\U\left(V'_K\right)$}
\obj(60,-40)[d]{$\U(V')$}
\mor{aa}{b}{}[\atleft,\solidline]
\mor{a}{d}{}[\atleft,\solidline]
\mor{c}{d}{}[\atleft,\solidline]
\mor{c}{b}{}[\atleft,\solidline]
\enddc
\qquad \qquad
\begindc{\commdiag}[8]
\obj(-60,40)[aa]{$\Lambda'=\Theta(\omega_{V',\mu}[\chi])$}
\obj(-51,34)[a]{$~$}
\obj(-60,-40)[bb]{$\Sigma'$}
\obj(-54,-36)[b]{$~$}
\obj(60,40)[c]{$\Pi'=\Theta(\Sigma')$}
\obj(51,34)[cc]{$~$}
\obj(60,-40)[d]{$\omega_{V',\mu}[\chi]$}
\mor{aa}{bb}{}[\atleft,\solidline]
\mor{a}{d}{}[\atleft,\solidline]
\mor{c}{d}{}[\atleft,\solidline]
\mor{cc}{b}{}[\atleft,\solidline]
\enddc
\end{equation*}
Again by the seesaw identity, we have
\begin{equation}\label{E:SS4}\tag{$\maltese.4$}
    m_{V'}(\Pi',\mu) = \dim\Hom_{\U(W)}(\Lambda',\Sigma') = m_{V^\flat}(\Pi^\flat,\mu)
\end{equation}
is non-zero. In particular, $\Pi'$ is non-zero. By \cite[Sect. 4.6(P2)]{MR3573972}, we know that $\Pi'$ is an irreducible tempered representation of $\U(V'_K)$ lies in the L-packet $\Pi_M$. The construction above shows the existence of $\Pi'\in\Pi_M$ such that $m_{V'}(\Pi',\mu)\ne 0$.

\vskip 10pt

\subsection{Conclusion}
In summary, we have shown that: 

\begin{Prop}
Let $V_0$ be an $n$-dimensional Hermitian space over $E$, and $M'_0$ a tempered L-parameter for the unitary group $\U(V_{0,K})$. Assume that Conjecture \ref{C-TGGP} holds for the L-parameter $M'_0$. Then it also holds for the L-parameter of the form 
\[
    M = M'_0 \cdot\calX + M_1,
\]
where $\calX$ is any conjugate symplectic character of $L^\times$, and $M_1$ is any conjugate self-dual character of $L^\times$ of parity $(-1)^n$.
\end{Prop}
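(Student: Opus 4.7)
The plan is to package the uniqueness and existence computations of Sections 3.1--3.2 into a single ``one-step reduction'' statement, and then to absorb the character twist via Lemma~\ref{L:CT}. The heavy representation-theoretic lifting---namely Lemma~\ref{L:WR-L}, Theorem~\ref{ThmBigThetaEnhanced}, Propositions~\ref{P:AdamsWR} and~\ref{P:AdamsWR-2}, and the matching of signs and component-group characters carried out in the two preceding subsections---is already in place; only a clean repackaging is needed.

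The work of Sections 3.1--3.2 establishes the following intermediate reduction. Let $M = M_0 + M_1$ be a tempered L-parameter for $\U(V_K)$ with $V$ of dimension $n+1$, where $M_1$ is conjugate self-dual of parity $(-1)^n$. Put
\[
    \calY := {}^{\tau}M_1 \cdot M_1^{-1} \cdot \Upsilon \cdot \mu^{-1}\circ\Nm_{L/E}, \qquad M^\flat := M_0 \cdot \calY,
\]
and let $V^\flat$ be the $n$-dimensional skew-Hermitian space whose sign is prescribed by $(\dagger\dagger)$. If Conjecture~\ref{C-TGGP} holds for $M^\flat$ on $\U(V^\flat_K)$, then it holds for $M$ on $\U(V_K)$. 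The uniqueness direction forces any $\Pi \in \Pi_M$ with nonzero $m_V(\Pi,\mu)$ into the member and sign predicted by Conjecture~\ref{C-TGGP}(2)--(4), while the existence direction produces such a $\Pi$ by running the two seesaw diagrams in reverse.

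Now, given $M = M'_0 \cdot \calX + M_1$ as in the Proposition, set $M_0 := M'_0 \cdot \calX$ and apply the intermediate reduction. It suffices to show that Conjecture~\ref{C-TGGP} holds for
\[
    M^\flat = M_0 \cdot \calY = M'_0 \cdot (\calX \cdot \calY).
\]
Since $M^\flat$ is a tempered L-parameter for the $n$-dimensional group $\U(V^\flat_K)$, each of its irreducible summands is conjugate self-dual of parity $(-1)^{n-1}$; combined with the parity $(-1)^n$ of each irreducible summand of $M_0$, this forces $\calY$ to be conjugate symplectic. Hence $\calX \cdot \calY$, as a product of two conjugate symplectic characters of $L^\times$, is conjugate orthogonal. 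By Lemma~\ref{L:CT}, the assumed validity of Conjecture~\ref{C-TGGP} for $M'_0$ transfers to $M'_0 \cdot (\calX \cdot \calY) = M^\flat$, and chaining with the intermediate reduction closes the argument.

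The only genuinely new verification beyond the preceding subsections is the parity count identifying $\calX \cdot \calY$ as conjugate orthogonal, which is routine. The real obstacle---the irreducibility of the big theta lift in Section~\ref{S:WR}, together with the bookkeeping of A-packet labels and sign characters in the two seesaw arguments---has already been cleared in the body of the paper; the proof of the Proposition itself is therefore a formal matter of combining these ingredients.
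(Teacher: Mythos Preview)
Your proposal follows essentially the same approach as the paper: both reduce the conjecture for $M$ to that for $M^\flat$ via the work of Sections~3.1--3.2, observe that $M^\flat$ is a conjugate-orthogonal twist of $M'_0$, and invoke Lemma~\ref{L:CT}.

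One small caution on your parity verification: the assertion that ``each irreducible summand'' of $M^\flat$ (respectively $M_0$) is conjugate self-dual of parity $(-1)^{n-1}$ (respectively $(-1)^n$) is not valid for a general tempered parameter $M'_0$, whose summands may pair off under conjugate-duality rather than be individually self-dual. The conclusion that $\calX\cdot\calY$ is conjugate orthogonal is nonetheless correct and is most cleanly seen directly from the explicit formula for $\calY = {}^{\tau}M_1\cdot M_1^{-1}\cdot\Upsilon\cdot(\mu^{-1}\circ\Nm_{L/E})$: the first two factors are conjugate orthogonal, $\Upsilon$ is conjugate orthogonal by choice, and $\mu^{-1}\circ\Nm_{L/E}$ is conjugate symplectic, so $\calY$ is conjugate symplectic and $\calX\cdot\calY$ is conjugate orthogonal. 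The paper simply asserts this parity without details, so your attempt to justify it is in the right spirit; just apply the count to the whole parameter (or to the explicit twist) rather than summand-by-summand.
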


\vskip 5pt

\begin{proof}
As we have explicated above, given such an L-parameter $M$, one can construct an L-parameter $M^\flat$ of $\U(V_{0,K})$. As long as Conjecture \ref{C-TGGP} holds for the L-parameter $M^\flat$, it also holds for $M$. On the other hand, from the construction of $M^\flat$, one can see that 
\[
    M^\flat = M'_0\cdot\calY
\]
for some conjugate orthogonal character $\calY$ of $L^\times$. Thus by Lemma \ref{L:CT}, Conjecture \ref{C-TGGP} holds for $M^\flat$.

\end{proof}

\vskip 5pt

\begin{Cor}
Theorem \ref{T:Main} holds.
\end{Cor}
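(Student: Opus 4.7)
The plan is to deduce Theorem \ref{T:Main} from the preceding Proposition by a straightforward induction on $n = \dim V$. The base case $n \leq 2$ is covered by Theorem \ref{T-TGGP}(1), and the inductive step is essentially one application of the Proposition: it strips off a single one-dimensional summand of $M$ at a time, reducing the $(n+1)$-dimensional case to the $n$-dimensional case.

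Concretely, for the inductive step, suppose Theorem \ref{T:Main} is known for dimension $n$, and let $M = M_1 + \cdots + M_{n+1}$ be a tempered parameter on $\U(V_K)$ with $\dim V = n+1$ and each $M_i$ one-dimensional conjugate self-dual of parity $(-1)^n$. I would fix any conjugate symplectic character $\calX$ of $L^\times$ (such characters exist in the $p$-adic setting), set
\[
    M'_0 \,:=\, (M_1 + \cdots + M_n)\cdot \calX^{-1},
\]
and regard $M'_0$ as a tempered parameter for the unitary group of an $n$-dimensional space. Since twisting by a conjugate symplectic character flips parity, each summand $M_i\cdot\calX^{-1}$ of $M'_0$ is one-dimensional conjugate self-dual of parity $(-1)^n\cdot(-1) = (-1)^{n-1}$, so $M'_0$ is of precisely the shape required by Theorem \ref{T:Main} in dimension $n$. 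By the inductive hypothesis, Conjecture \ref{C-TGGP} holds for $M'_0$.

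Now I would apply the Proposition to $M'_0$, with the chosen $\calX$ and with $M_{n+1}$ playing the role of the ``$M_1$'' in the statement of the Proposition (noting that $M_{n+1}$ has parity $(-1)^n$, as the Proposition requires). This immediately yields Conjecture \ref{C-TGGP} for
\[
    M'_0\cdot\calX + M_{n+1} \,=\, M_1 + \cdots + M_{n+1} \,=\, M,
\]
completing the induction.

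There is no real obstacle beyond what has already been absorbed into the proof of the Proposition (which was the substantive content, involving the two seesaw diagrams, the A-packet description of the Weil representation in Lemma \ref{L:WR-L}, and the irreducibility/Ext-vanishing of the big theta lift in Theorem \ref{ThmBigThetaEnhanced}). The only bookkeeping to check here is the parity computation above, and the existence of a conjugate symplectic character $\calX$; both are immediate. Hence the Corollary follows.
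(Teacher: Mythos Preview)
Your proposal is correct and is essentially the same induction argument as the paper's own proof. The paper phrases the inductive step by observing that the auxiliary parameter $M^\flat$ built in the seesaw construction is again a sum of one-dimensional conjugate self-dual characters of the required parity; you phrase it by directly exhibiting $M'_0 = (M_1+\cdots+M_n)\cdot\calX^{-1}$ and invoking the Proposition --- but these are the same step viewed from opposite ends, and your parity bookkeeping is correct.
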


\vskip 5pt

\begin{proof}
Simply note that if $M$ is a summation of conjugate self-dual characters as described in Theorem \ref{T:Main}, then so is $M^\flat$.

\end{proof}

\vskip 5pt

The reader may notice the similarity of our set up with the paper \cite{X} of Hang Xue, in which he showed the Bessel case of the local GGP conjecture for unitary groups over $\mathbb{R}$. There, he worked also with L-parameters $M$ of the same form as those in Theorem \ref{T:Main}. Indeed, we are partly inspired by his results to consider these $M$'s. However, the inductive argument in our proof is different from that in \cite{X} (not to mention that the setting of our result is different).

\vskip 5pt

We end up this paper with a remark on the global conjecture \cite[Conj. 9.1]{TGGP}. One can expect to prove the global conjecture for the near equivalence class
\[
    M=M_1+\cdots+M_n
\]
with each $M_i$ conjugate self-dual automorphic character of $\GL_1$ of parity $(-1)^{n-1}$, by using the same argument. Instead of the Adams' conjecture used in this paper, one will need to show an analog of the Siegel-Weil formula in the global case, so that one can compare the theta integrals of $\omega_{V,\mu}$ and $\omega_{V^\flat,\mu}$. More precisely, let $\Omega_{V}$ and $\Omega_{V^\flat}$ be the Weil representation associated to $\U(V)\times\U(\calR W)$ and $\U(V^\flat)\times\U(\calR W)$ respectively, one needs to compare
\[
    \int_{[\U(V)]}\theta_\varphi(g,h)f(g)\,dg \quad \textit{for $\varphi\in\Omega_V$, $f\in\omega_{V,\mu}$, $g\in\U(V)$, $h\in\U(\calR W)$}
\]
and
\[
    \int_{[\U(V^\flat)]}\theta_{\varphi'}(g',h)f'(g')\,dg' \quad \textit{for $\varphi'\in\Omega_{V^\flat}$, $f'\in\omega_{V^\flat,\mu}$, $g'\in\U(V^\flat)$, $h\in\U(\calR W)$}.
\]
Unfortunately,  these theta integrals diverge in general. So one has to properly regularize these theta integrals first. Once a global analog of Proposition \ref{P:AdamsWR} has been established, the remaining parts should go over smoothly.
\vskip 10pt

\noindent{\bf Acknowledgments}: Both authors were supported by a Singapore government MOE Tier One grant R-146-000-320-114 during the course of this work. The work was completed when the second author visited the Erwin Schrodinger Institute in Vienna in April 2022; he thanks the ESI for excellent working conditions and inspiring atmosphere. The authors thank
Petar Baki\'c, Marcela Hanzer and Jialiang Zou for helpful discussion and comments. 

\vskip 15pt

\bibliographystyle{alpha}
%\nocite{*}
\bibliography{TGGP4TRef}

\end{document}